\pdfoutput=1
\RequirePackage{ifpdf}
\ifpdf 
\documentclass[pdftex]{sigma}
\else
\documentclass{sigma}
\fi

\usepackage{tensor}

\numberwithin{equation}{section}

\newtheorem{Theorem}{Theorem}[section]
\newtheorem*{Theorem*}{Theorem}
\newtheorem{Corollary}[Theorem]{Corollary}
\newtheorem{Lemma}[Theorem]{Lemma}
\newtheorem{Proposition}[Theorem]{Proposition}
\newtheorem{Conjecture}[Theorem]{Conjecture}
 { \theoremstyle{definition}
\newtheorem{Definition}[Theorem]{Definition}

}

\begin{document}

\newcommand{\arXivNumber}{2403.06782}

\renewcommand{\PaperNumber}{018}

\FirstPageHeading

\ShortArticleName{Mass from an Extrinsic Point of View}

\ArticleName{Mass from an Extrinsic Point of View}

\Author{Alexandre DE~SOUSA~$^{\rm a}$ and Frederico GIR\~AO~$^{\rm b}$}

\AuthorNameForHeading{A.~de~Sousa and F.~Gir\~ao}

\Address{$^{\rm a)}$~Escola de Ensino Fundamental e M\'edio Santa Luzia, Fortaleza, 60110-300, Brazil}
\EmailD{\href{mailto:alexandre.ads@pm.me}{alexandre.ads@pm.me}}

\Address{$^{\rm b)}$~Departamento de Matem\'atica, Universidade Federal do Cear\'a, Fortaleza, 60455-760, Brazil}
\EmailD{\href{mailto:fred@mat.ufc.br}{fred@mat.ufc.br}}

\ArticleDates{Received October 22, 2024, in final form March 03, 2025; Published online March 18, 2025}

\Abstract{We express the $q$-th Gauss--Bonnet--Chern mass of an immersed submanifold of Euclidean space as a linear combination of two terms: the total $(2q)$-th mean curvature and the integral, over the entire manifold, of the inner product between the $(2q+1)$-th mean curvature vector and the position vector of the immersion. As a consequence, we obtain, for each $q$, a geometric inequality that holds whenever the positive mass theorem (for the $q$-th Gauss--Bonnet--Chern mass) holds.}

\Keywords{Gauss--Bonnet--Chern mass; asymptotically Euclidean submanifolds; positive mass theorem; Hsiung--Minkowski identities}

\Classification{83C99; 53C40; 51M16}

\section{Introduction}

In this article, we explore the concept of mass from an extrinsic point of view.
Our approach is based on the introduction of a class of immersions
in Euclidean space whose members we call \emph{asymptotically Euclidean immersions} (Definition \ref{defn AE}) and on an integral
identity (Theorem \ref{main theorem}) that can be seen as a version of the
classical Hsiung-Minkowski formulas \cite{Hsiungintegralformulasclosed1956,KwongExtensionHsiungMinkowski2016} for a class of non-compact
immersed submanifolds. Through this identity, we deduce a geometric inequality
(Corollary \ref{corollary main theorem}) that must be satisfied whenever the
positive mass conjecture for the GBC mass (and ADM mass, in particular) is valid.

We also explore the necessary conditions for vector fields to generate
the asymptotic charges of interest (Proposition \ref{mass using another vector field})
and present two conjectures. In one of them, we conjecture that asymptotically Euclidean
spaces admit an asymptotically Euclidean isometric immersion (Conjecture
\ref{q1}); in the other, we conjecture that the aforementioned geometric inequality must hold
whenever an asymptotically Euclidean immersion satisfies a natural hypothesis
(Conjecture \ref{q2}). If both conjectures are valid, the positive mass
conjecture for the GBC mass (and ADM mass, in particular) can be directly deduced
from our results.

\section{Mass of asymptotically Euclidean spaces}

In this section, we establish the background needed for understanding
our method. As far as we are aware, the results presented
in Proposition \ref{mass using another vector field} and Corollary
\ref{gradient field} are novel in the literature. They are fundamental ingredients
for the employment of our line of action, since they establish sufficient conditions
for a vector field to generate the asymptotic charges we are interest in.

We begin by remembering the concept of an asymptotically Euclidean end.
\begin{Definition}[asymptotically Euclidean end]
 An asymptotically Euclidean end of order
 $\tau$, with \(\tau > 0\), is a Riemannian
 manifold $(E^n,g)$, $n \geq 3$, for which there exists a diffeomorphism
 $\Psi\colon E \to \mathbb{R}^n \setminus \overline{B}_1(0)$, introducing coordinates
 in $E$, say $\Psi(x) = \bigl(x^1,x^2, \ldots, x^n\bigr)$, such that, in these coordinates,
 the following asymptotic condition holds:
 \begin{equation} \label{decay conditions}
  |g_{ij} - \delta_{ij}| + \rho|g_{ij,k}| + \rho^2 |g_{ij,kl}| = O (\rho^{-\tau}), \qquad
  \text{as}\ \rho \to \infty,
 \end{equation}
 for all $i,j,k,l \in \lbrace 1, 2, \ldots, n \rbrace$,
 where the $g_{ij}$'s are the coefficients of $g$ with respect to the coordinates
 \(\Psi(x)\), $g_{ij,k} = \partial g_{ij} / \partial x^k$,
 $g_{ij,kl} = \partial g_{ij} / \partial x^k \partial x^l$,
 and \(\rho = |\Psi(x)|\) denotes
 the distance function to the origin with respect
 to the Euclidean metric induced on the end.
\end{Definition}

Our model mass concept is the ADM mass of an end $(E,g)$, introduced by Arnowitt,
Deser and Misner in \cite{ArnowittCoordinateInvarianceEnergy1961}.

\begin{Definition}[ADM mass]
 The ADM mass of an asymptotically Euclidean end \((E^n,g)\) is defined by
 \begin{equation} \label{ADM mass expression}
  \mathrm{m_{ADM}}(E,g) = \frac{1}{2(n-1)\omega_{n-1}} \lim_{\rho \to \infty} \int_{S_\rho} (g_{ij,i} - g_{ii,j}) \nu^j \,  {\rm d}S_\rho,
 \end{equation}
 where $\omega_{n-1}$ is the volume of the unit sphere of dimension $n-1$, $S_\rho$ is the Euclidean coordinate sphere of radius $\rho$, ${\rm d}S_\rho$ is the volume form induced on $S_\rho$ by the Euclidean metric, and $\nu $ is the outward pointing unit normal to $S_\rho$ (with respect to the Euclidean metric).
\end{Definition}

It is known that if $\tau > (n-2)/2$ and the scalar curvature of $(E,g)$ is integrable, then the limit~(\ref{ADM mass expression}) exists, is finite, and is a geometric invariant, that is, two coordinate systems satisfying~(\ref{decay conditions}) yield the same value for it \cite{Bartnikmassasymptoticallyflat1986, ChruscielBoundaryConditionsSpatial1986} (see also \cite{Michelgeometricinvariance2011}).

A complete Riemannian manifold $(M^n,g)$, $n \geq 3$, is said to be asymptotically Euclidean of order $\tau$ if there exists a compact subset $K$ of $M$ such that $M \setminus K$ has finitely many connected components and, for any connected component $E$ of $M \setminus K$, it occurs that $(E,g)$ is an asymptotically Euclidean end of order $\tau$.

If $(M^n,g)$ is an asymptotically Euclidean Riemannian manifold of order $\tau > (n-2)/2$ whose scalar curvature is integrable, then its ADM mass, denoted by $\mathrm{m}_{\mathrm{ADM}}(M,g)$, is defined as the sum of the ADM masses of its ends.

One of the most important results in mathematical general relativity is the positive mass theorem (PMT):

\begin{Theorem} 
   If $(M^n,g)$ is an asymptotically flat Riemannian manifold of order $\tau > (n-2)/2$ whose scalar curvature is nonnegative and integrable, then each of its ends has nonnegative ADM mass. Moreover, if the ADM mass of at least one of its ends is zero, then $(M,g)$ is isometric to the Euclidean space $(\mathbb{R}^n,\delta)$.
\end{Theorem}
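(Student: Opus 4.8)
The plan is to follow Witten's spinorial argument, which delivers both the inequality and the rigidity statement in a unified way, under the additional hypothesis that $M$ is spin. I would first equip $M$ with a spin structure and consider the associated spinor bundle $\mathbb{S}$, with Dirac operator $D$ and connection Laplacian $\nabla^*\nabla$. The cornerstone is the Lichnerowicz--Weitzenb\"ock identity
\[
D^2 = \nabla^*\nabla + \frac{1}{4}R,
\]
where $R$ denotes the scalar curvature. Integrating $\langle D^2\psi,\psi\rangle = \langle\nabla^*\nabla\psi,\psi\rangle + \frac14 R|\psi|^2$ over a large domain $\Omega_\rho$ bounded by a coordinate sphere $S_\rho$ and applying the divergence theorem yields Witten's integral identity
\[
\int_{\Omega_\rho}\left(|\nabla\psi|^2 - |D\psi|^2 + \frac{1}{4} R|\psi|^2\right)\mathrm{d}V = \int_{S_\rho}\mathcal{B}(\psi),
\]
where the boundary integrand $\mathcal{B}(\psi)$ is an explicit bilinear expression in $\psi$ and its covariant derivative.

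The analytic heart of the argument is to solve, on a chosen end $E$, the equation $D\psi = 0$ with $\psi$ asymptotic to a prescribed constant spinor $\psi_0$ on $E$ and decaying on all other ends. Writing $\psi = \psi_0 + \phi$ with $\phi$ decaying at infinity, this amounts to solving $D\phi = -D\psi_0$, which I would carry out using the Fredholm theory of $D$ on weighted Sobolev spaces adapted to the decay rate $\tau > (n-2)/2$; the nonnegativity and integrability of $R$ guarantee that $D$ has no nontrivial decaying kernel, so the equation is solvable. With such a harmonic spinor in hand, the left-hand side above reduces to $\int_{\Omega_\rho}(|\nabla\psi|^2 + \frac{1}{4} R|\psi|^2)\,\mathrm{d}V \ge 0$, while a careful asymptotic expansion of $\mathcal{B}(\psi)$ shows that the boundary flux converges, as $\rho\to\infty$, to a positive multiple of $\mathrm{m}_{\mathrm{ADM}}(E,g)\,|\psi_0|^2$ (the contributions of the remaining ends vanish because $\psi\to 0$ there). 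Passing to the limit gives $\mathrm{m}_{\mathrm{ADM}}(E,g)\ge 0$, and repeating the construction for each end yields the inequality for all of them.

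For the rigidity statement, if some end has zero mass then the limiting identity forces $\int_M(|\nabla\psi|^2 + \frac{1}{4} R|\psi|^2) = 0$, whence $\nabla\psi\equiv 0$ and $R\equiv 0$; since $\psi_0$ was arbitrary, one obtains a maximal family of parallel spinors, and the integrability condition $\mathcal{R}\cdot\psi = 0$ then forces the full curvature tensor to vanish. A complete, flat, asymptotically Euclidean manifold is isometric to $(\mathbb{R}^n,\delta)$, which completes the argument.

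The main obstacle is twofold. First, the spin hypothesis is genuinely restrictive: for non-spin manifolds one must instead run the Schoen--Yau minimal hypersurface induction, in which a hypothetical negative mass is used to produce a complete stable minimal hypersurface whose stability contradicts $R\ge 0$ through the second variation formula; that approach carries the dimension restriction $n\le 7$ coming from the regularity theory of area-minimizers, later circumvented by more delicate arguments. Second, even in the spin case, the existence and precise asymptotics of the Witten spinor---establishing that the boundary flux is exactly the ADM mass and controlling all error terms under the sharp decay rate $\tau > (n-2)/2$---is the technically demanding step, requiring weighted elliptic estimates.
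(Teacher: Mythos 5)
The statement you are proving is quoted in the paper as a known background result; the authors give no proof at all, instead citing the literature (Schoen--Yau for $n\le 7$ and for conformally flat manifolds, Witten for spin manifolds, and Lohkamp and Schoen--Yau for the general case). Your proposal is therefore not competing with an argument in the paper but with the cited references, and as an outline of Witten's spinorial proof it is essentially accurate: the Lichnerowicz formula, the existence of an asymptotically constant harmonic spinor via Fredholm theory on weighted spaces, the identification of the boundary flux with a positive multiple of $\mathrm{m}_{\mathrm{ADM}}(E,g)\,|\psi_0|^2$, and the rigidity argument via a full family of parallel spinors are all the right ingredients, and the points you flag as technically demanding (the weighted elliptic estimates and the asymptotic expansion of the boundary term under the sharp decay $\tau>(n-2)/2$) are indeed where the real work lies.

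The genuine gap is the one you yourself identify but do not close: the theorem as stated carries no spin hypothesis, while your entire argument is built on the spinor bundle and so proves only the spin case. This is not a removable technicality --- there exist non-spin asymptotically flat manifolds in every dimension $n\ge 5$, and for those your construction of the Witten spinor has no starting point. To obtain the statement as written you would have to either (i) run the Schoen--Yau minimal hypersurface descent, which in its classical form is limited to $n\le 7$ by the regularity theory of area minimizers, or (ii) invoke the later singular-hypersurface arguments of Lohkamp or of Schoen--Yau that remove the dimension restriction. As it stands, your proof establishes the theorem only under an additional topological hypothesis, so it does not prove the stated result in full generality; it does, however, cover the cases most relevant to the rest of the paper (Euclidean graphs and hypersurfaces are spin, as the authors note).
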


The PMT was settled by Schoen and Yau when $n \leq 7$ \cite{Schoencompletemanifoldswithnonnegative1979, Schoenproofpositivemass1979, Schoentheenergyandthelinearmomentum1981} and when $(M,g)$ is conformally Euclidean \cite{SchoenConformallyflatmanifolds1988}, and by Witten when $M$ is spin \cite{Wittennewproofpositive1981} (see also \cite{ParkerWittenproofpositive1982}). The general cases of the PMT were treated by Lohkamp \cite{LohkampHigherDimensionalPositive2006,Lohkampskinstructuresonminimal2015, Lohkamphyperbolicgeometryandpotentialtheoryminimal2015, Lohkampskinstructuresinscalarcurvature2015} and by Schoen and Yau \cite{Schoenpositivescalarcurvatureandminimal2022}. Proofs for the case when $(M,g)$ is an Euclidean graph (without the rigidity statement) were given by Lam~\mbox{\cite{LamGraphsCasesRiemannian2010,LamGraphCasesRiemannian2011}} for graphs of codimension one (see also \cite{deLimaADMmassasymptotically2014}) and by Mirandola and Vit\'orio~\cite{Mirandolapositivemasstheorem2015} for graphs of arbitrary codimension. The case of Euclidean hypersurfaces (not necessarily graphs), including the rigidity statement, was treated in \cite{Huanghypersurfaceswithnonnegativescalar2013}. Note that, since Euclidean graphs (of arbitrary codimension) and Euclidean hypersurfaces are spin, these cases also follow from Witten's proof.

In \cite{GeAnewmassforasymptoticallyflatmanifolds2014}, a new mass (actually, a family of masses) for asymptotically Euclidean manifolds, called the Gauss--Bonnet--Chern mass, was introduced. For a positive integer $q < n/2$, consider the $q$-th Gauss--Bonnet curvature, denoted $L_{(q)}$, and defined by
\begin{equation*} 
  L_{(q)} = \frac{1}{2^q}
  \delta_{b_1 b_2 \ldots b_{2q - 1} b_{2q}}^{a_1 a_2 \ldots a_{2q - 1} a_{2q}}
  \prod_{s = 1}^{q} \tensor[]{R}{_{a_{2s - 1} a_{2s}}^{b_{2s - 1} b_{2s}}} = P_{(q)}^{ijkl}R_{ijkl},
\end{equation*}
where $R$ is the Riemann curvature tensor of $(M,g)$ and $P_{(q)}$, which has the same symmetries of~$R$ (see \cite[Section 3]{GeAnewmassforasymptoticallyflatmanifolds2014}), is given by
\begin{equation*} 
 P_{(q)}^{ijkl} = \frac{1}{2^q}\delta_{b_1b_2 \cdots b_{2q-3}b_{2q-2}b_{2q-1}b_{2q}}^{a_1a_2 \cdots a_{2q-3}a_{2q-2}ij}
 \Biggl( \prod_{s=1}^{q-1} R\indices{_{a_{2s-1}a_{2s}}^{b_{2s-1}b_{2s}}} \Biggr)g^{b_{2q-1}k}g^{b_{2q}l}.
\end{equation*}

\begin{Definition}[GBC mass]
 The $q$-th GBC mass of an asymptotically Euclidean end $(E^n,g)$ is defined by
 \begin{equation} \label{GBC mass}
  \mathrm{m}_q(E,g) = c(n,q) \lim_{\rho \to \infty} \int_{S_\rho} P_{(q)}^{ijkl}g_{jk,l}\nu_i \,{\rm d}S_\rho,
 \end{equation}
 where
 \begin{equation*} 
  c(n,q) = \frac{(n-2q)!}{2^{q-1}(n-1)! \, \omega_{n-1}}
 \end{equation*}
 and $S_\rho$, ${\rm d}S_\rho$, $\nu$ and $\omega_{n-1}$ are as in the definition of the ADM mass.
\end{Definition}

Note that $L_{(1)}$ is just the scalar curvature and, as observed in \cite{GeAnewmassforasymptoticallyflatmanifolds2014}, $\mathrm{m}_1$ coincides with the ADM mass.

In the same article, it is shown that if $\tau > \tau_q$ and $L_{(q)}$ is integrable, then the limit (\ref{GBC mass}) exists, is finite, and is a geometric invariant, where here and throughout the text,
\begin{equation*} 
\tau_q = \frac{(n-2q)}{(q+1)}.
\end{equation*}

As in the $q=1$ case, if $(M^n,g)$ is an asymptotically Euclidean manifold of order $\tau > \tau_q$ whose $q$-th Gauss--Bonnet curvature is integrable, then its $q$-th GBC mass, denoted by $\mathrm{m}_q(M,g)$, is defined as the sum of the $q$-th GBC masses of its ends.

The following is a version of the PMT for the GBC mass.

\begin{Conjecture} \label{PMT for m_q}
Let $n$ and $q$ be integers such that $n \geq 3$ and $0 < q < n/2$. If $(M^n,g)$ is an asymptotically Euclidean Riemannian manifold of order $\tau > \tau_q$ whose $q$-th Gauss--Bonnet curvature $L_{(q)}$ is nonnegative and integrable, then the $q$-th GBC mass of each of its ends is nonnegative. Moreover, if the GBC mass of at least one of its ends is zero, then $(M,g)$ is isometric to the Euclidean space $(\mathbb{R}^n,\delta)$.
\end{Conjecture}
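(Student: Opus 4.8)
The statement is the positive mass theorem for the $q$-th GBC mass, which is the very target the paper's machinery is built to reach; the plan is to reduce it to an extrinsic inequality via an isometric immersion, exactly along the lines sketched in the introduction. First I would pass to the immersed setting: assuming Conjecture \ref{q1}, realize $(M,g)$ as an asymptotically Euclidean isometric immersion $F\colon M \to \mathbb{R}^{n+p}$ in the sense of Definition \ref{defn AE}. The gain is that all intrinsic curvature data becomes expressible through the second fundamental form via the Gauss equation; in particular, the $q$-th Gauss--Bonnet curvature $L_{(q)}$ of $g$ becomes a universal polynomial in the extrinsic curvatures, and for a hypersurface it is, up to a positive constant, the $(2q)$-th mean curvature $H_{2q}$, i.e., the $(2q)$-th elementary symmetric function of the principal curvatures. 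Thus the hypothesis $L_{(q)} \geq 0$ translates into a pointwise sign condition on an extrinsic quantity.

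Second, I would apply the integral identity of Theorem \ref{main theorem}, which expresses the mass as a fixed linear combination
\[
\mathrm{m}_q(M,g) = \alpha \int_M H_{2q}\,{\rm d}M + \beta \int_M \langle \vec{H}_{2q+1}, X\rangle \,{\rm d}M,
\]
where $X$ is the position vector of the immersion, $\vec{H}_{2q+1}$ is the $(2q+1)$-th mean curvature vector, and $\alpha,\beta$ are explicit dimensional constants of definite sign. The nonnegativity and integrability of $L_{(q)}$ guarantee that the first integral converges and carries the correct sign, so the entire difficulty is concentrated in the second, position-vector term. For nonnegativity one argues end by end, applying this on each asymptotically Euclidean end and summing.

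Third comes the main obstacle. Unlike the total $(2q)$-th mean curvature, the quantity $\int_M \langle \vec{H}_{2q+1}, X\rangle\,{\rm d}M$ is \emph{not} controlled by $L_{(q)} \geq 0$ alone: its sign depends on the global geometry of the immersion, for instance on star-shapedness, mean-convexity, or some positivity of the higher mean curvature vector relative to $X$. This is precisely the content of Conjecture \ref{q2}: under a natural hypothesis on the immersion, the geometric inequality of Corollary \ref{corollary main theorem} holds, forcing $\mathrm{m}_q \geq 0$. So the strategy is to identify that natural hypothesis and verify that it is inherited by the immersion produced in the first step. For $q=1$ this term can be circumvented through Hsiung--Minkowski type arguments and the spin or minimal-surface techniques that settle the classical PMT, but for $q \geq 2$ controlling the position-vector term is the genuinely open part, and I expect it to be the crux of any complete proof.

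Finally, for rigidity, I would analyze the equality case. If $\mathrm{m}_q=0$, then since both integrands carry the same sign they must vanish identically, yielding $H_{2q}\equiv 0$, $\langle \vec{H}_{2q+1},X\rangle \equiv 0$, and $L_{(q)}\equiv 0$. Feeding these back into the Gauss equation and using the decay \eqref{decay conditions} to pin down the behavior at infinity, one expects to conclude that the immersion is totally geodesic, hence that its image is an affine subspace and $(M,g)$ is isometric to $(\mathbb{R}^n,\delta)$. Making the three steps rigorous, above all the global immersion of step one and the sign control of step three, is exactly where the conjecture remains unresolved.
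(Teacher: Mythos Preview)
Your outline mirrors precisely the conditional route the paper itself proposes: the statement is labelled a \emph{Conjecture} there and is not proved. The paper's strategy is exactly yours---assume Conjecture~\ref{q1} to obtain an asymptotically Euclidean isometric immersion, invoke Theorem~\ref{main theorem} to turn $\mathrm{m}_q$ into the extrinsic integral, and then appeal to Conjecture~\ref{q2} for the sign and the equality case. So on the level of strategy you are aligned with the paper, and you correctly identify that the existence of the immersion and the control of the position-vector term are the genuinely open steps.

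Two remarks on the details. First, the identification $L_{(q)}\geq 0 \Longleftrightarrow S_{2q}\geq 0$ does not require codimension one: combining \eqref{trace Lovelock}, \eqref{trace T} and \eqref{equation relation G and T} gives $L_{(q)}=(2q)!\,S_{2q}$ in any codimension, so your restriction to hypersurfaces is unnecessary. Second, and more substantively, your rigidity sketch assumes that both integrands in Theorem~\ref{main theorem} carry the same sign, so that $\mathrm{m}_q=0$ forces each to vanish separately. But only the $S_{2q}$ term has a sign from the hypothesis; there is no a priori reason for $\langle S_{2q+1},\bar{Z}\rangle$ to be pointwise nonnegative---establishing that is essentially the hard part of Conjecture~\ref{q2}, which the paper states as a black box rather than as a vanishing argument. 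Hence your deduction ``$S_{2q}\equiv 0$ and $\langle S_{2q+1},\bar{Z}\rangle\equiv 0$, therefore totally geodesic'' is not justified as written. Note also that Theorem~\ref{main theorem} computes the \emph{total} mass summed over all ends, so ``arguing end by end'' would require an additional reduction that neither you nor the paper supplies.
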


Conjecture \ref{PMT for m_q}, without the rigidity statement, was proved for graphs of codimension one in~\cite{GeAnewmassforasymptoticallyflatmanifolds2014}; the case of graphs of arbitrary codimension and flat normal bundle was done by Li, Wei and Xiong when $q=2$ \cite{LiGaussBonnetChernmassgraphic2014} and by the authors when $0 < q < n/2$ \cite{AlexandredeSousaUmBreveEstudos2016,AlexandredeSousaGaussBonnetChernmasshigher2019}. This conjecture, including the rigidity statement, is known to be true for conformally flat manifolds \cite{GeGaussBonnetChernmassconformally2014}.

\subsection{Mass in terms of the Lovelock tensor}

Let $(E^n,g)$, $n \geq 3$, be an asymptotically Euclidean end of order $\tau > \tau_1$, and let $G$ be its Einstein tensor, that is,
\begin{equation*}
  G = \mathrm{Ric} - \frac{1}{2}(\mathrm{Sc})g,
\end{equation*}
where $\mathrm{Ric}$ and $\mathrm{Sc}$ denote, respectively, the Ricci tensor and the scalar curvature of $(E,g)$. Throughout the text, we denote by $X$ the vector field given by{\samepage
\begin{equation} \label{X}
 X = x^i \frac{\partial}{\partial x^i},
\end{equation}
where $\Psi(x) = \bigl( x^1, x^2, \ldots, x^n \bigr)$ is a coordinate system satisfying (\ref{decay conditions}).}

Is is known that the ADM mass of $(E,g)$ can be computed as follows (see \cite{Ashtekar-Hansen,Chrusciel86,Chrusciel87}):
\begin{equation} \label{mass in terms of the Einstein tensor}
 \mathrm{m}_{\mathrm{ADM}}(E,g) = -\frac{1}{(n-1)(n-2)\omega_{n-1}} \lim_{\rho \to \infty} \int_{S_\rho} G(X,\nu_g) \, {\rm d}S_\rho^g,
\end{equation}
where $\omega_{n-1}$ and $S_\rho$ are as in (\ref{ADM mass expression}), $\nu_g$ is the outward unit normal vector to $S_\rho$ with respect to the metric $g$, and ${\rm d}S_\rho^g$ is the volume form induced on $S_\rho$ by $g$. The equivalence between formulas~(\ref{ADM mass expression}) and~(\ref{mass in terms of the Einstein tensor}) can be shown by reducing the general case to the case of harmonic asymptotics via a density theorem (see, for example, \cite{Huang12}). Proofs without the use of a density theorem were given in \cite{HerzlichComputingasymptoticinvariants2016,Miao-Tam}.

As we will see in a moment, a formula similar to (\ref{mass in terms of the Einstein tensor}) holds for the GBC mass. To state this result, we need to recall the so-called Lovelock curvature tensors.

Let $(M^n,g)$, $n \geq 3$, be a Riemannian manifold and let $q < n/2$ be a positive integer. The $q$-th Lovelock curvature tensor, denoted by $G_{(q)}$, is defined by
\begin{equation} \label{Lovelock tensor}
 G_{(q)ij} = - \frac{1}{2^{q + 1}} g_{ik}
 \delta_{j a_1 a_2 \ldots a_{2q - 1} a_{2q}}^{k b_1 b_2 \ldots b_{2q - 1} b_{2q}}
 \prod_{s = 1}^{q} \tensor[]{R}{_{b_{2s - 1} b_{2s}}^{a_{2s - 1} a_{2s}}}.
\end{equation}
Note that $G_{(1)}$ is just the Einstein tensor.
\begin{Proposition}
 The Lovelock curvature tensor $G_{(q)}$ satisfies the following:
 \begin{itemize}\itemsep=0pt
 \item[$(i)$] It is symmetric, that is,
  \begin{equation} \label{G is symmetric}
   G_{(q)ij} = G_{(q)ji}.
  \end{equation}
 \item[$(ii)$] It is divergent-free, that is,
  \begin{equation} \label{Lovelock divergent free}
   \nabla^iG_{(q)ij} = 0.
  \end{equation}
 \item[$(iii)$] Its trace satisfies the equation
  \begin{equation} \label{trace Lovelock}
   \mathrm{tr}_g \, G_{(q)} = - \frac{n-2q}{2} L_{(q)}.
  \end{equation}
 \end{itemize}
\end{Proposition}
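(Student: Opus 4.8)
The plan is to derive all three properties from the algebraic symmetries of the generalized Kronecker delta appearing in (\ref{Lovelock tensor}) together with the symmetries of the Riemann tensor (antisymmetry in each pair, the pair symmetry $R_{abcd}=R_{cdab}$, and the first and second Bianchi identities). It is convenient to raise the index $i$ and work with
\begin{equation*}
 G_{(q)}{}^{i}{}_{j} = -\frac{1}{2^{q+1}}\, \delta^{i b_1 \cdots b_{2q}}_{j a_1 \cdots a_{2q}} \prod_{s=1}^{q} R^{a_{2s-1}a_{2s}}_{b_{2s-1}b_{2s}},
\end{equation*}
which follows from (\ref{Lovelock tensor}) since $g^{im}g_{mk}=\delta^{i}_{k}$. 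Throughout I would use that $\delta^{i b_1 \cdots b_{2q}}_{j a_1 \cdots a_{2q}}$ is totally antisymmetric in its upper and (separately) in its lower indices, and the contraction identity $\delta^{i b_1 \cdots b_{2q}}_{i a_1 \cdots a_{2q}} = (n-2q)\,\delta^{b_1 \cdots b_{2q}}_{a_1 \cdots a_{2q}}$, valid in dimension $n$.

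For the trace (\ref{trace Lovelock}) I would set $i=j$ and contract. The contraction identity immediately gives
\begin{equation*}
 \mathrm{tr}_g\, G_{(q)} = G_{(q)}{}^{i}{}_{i} = -\frac{n-2q}{2^{q+1}}\, \delta^{b_1 \cdots b_{2q}}_{a_1 \cdots a_{2q}} \prod_{s=1}^{q} R^{a_{2s-1}a_{2s}}_{b_{2s-1}b_{2s}}.
\end{equation*}
Relabelling the dummy indices $a \leftrightarrow b$ and applying the pair symmetry $R^{a_{2s-1}a_{2s}}_{b_{2s-1}b_{2s}} = R_{b_{2s-1}b_{2s}}^{a_{2s-1}a_{2s}}$ identifies the remaining sum with $2^{q} L_{(q)}$, whence $\mathrm{tr}_g\, G_{(q)} = -\tfrac{n-2q}{2}\, L_{(q)}$.

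For the divergence-free property (\ref{Lovelock divergent free}) I would apply $\nabla^{i}$ to $G_{(q)ij}$ in the original form (\ref{Lovelock tensor}). Since the metric and the Kronecker symbols are covariantly constant, metric compatibility moves the derivative onto the Riemann factors, producing (up to the constant) $\delta^{k b_1 \cdots b_{2q}}_{j a_1 \cdots a_{2q}} \nabla_k \bigl(\prod_{s} R^{a_{2s-1}a_{2s}}_{b_{2s-1}b_{2s}}\bigr)$. Expanding by the Leibniz rule yields, for each factor $t$, a term in which the derivative index $k$ together with the lower pair $b_{2t-1}, b_{2t}$ of $\nabla_k R^{a_{2t-1}a_{2t}}_{b_{2t-1}b_{2t}}$ are all contracted against upper indices of the delta, hence antisymmetrized over $k, b_{2t-1}, b_{2t}$. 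By the second Bianchi identity — read through the pair symmetry so that $b_{2t-1}b_{2t}$ plays the role of the first pair — this antisymmetrization vanishes, so every term dies and $\nabla^{i} G_{(q)ij} = 0$.

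The symmetry (\ref{G is symmetric}) is the step I expect to be the main obstacle, since the two free indices enter asymmetrically: $j$ sits inside the generalized delta while $i$ is attached through the metric $g_{ik}$. The plan is to expand $\delta^{k b_1 \cdots b_{2q}}_{j a_1 \cdots a_{2q}} = (2q+1)!\, \delta^{k}_{[j}\delta^{b_1}_{a_1}\cdots \delta^{b_{2q}}_{a_{2q}]}$, so that $g_{ik}G_{(q)}{}^{k}{}_{j}$ becomes a sum over which of the lower indices $j, a_1, \ldots, a_{2q}$ the factor $g_{i\bullet}$ attaches to. The term attaching $g_{ij}$ is proportional to $g_{ij}L_{(q)}$ and is manifestly symmetric; the remaining terms, attaching $g_{i a_m}$ with $j$ occupying the freed slot, must be shown to reproduce exactly the corresponding terms of $G_{(q)ji}$ after relabelling the summed $a$-indices and invoking the pair symmetry of the Riemann factors. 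Carrying out this matching while tracking the signs generated by the antisymmetrization is the delicate bookkeeping, and it is where $R_{abcd}=-R_{bacd}$ and $R_{abcd}=R_{cdab}$ do the essential work. (Conceptually, one may also observe that $G_{(q)}$ is, up to a constant, the metric variation of $\int_M L_{(q)}\,\mathrm{d}V_g$, which renders the symmetry and, via diffeomorphism invariance, the divergence-free property automatic; but I would favour the direct verification above for self-containedness.)
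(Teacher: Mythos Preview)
Your argument is correct, but it takes a different route from the paper. The paper's proof is essentially a one-liner: it cites Lovelock's original articles \cite{Lovelock70,Lovelock71} for the symmetry~(\ref{G is symmetric}) and the divergence-free property~(\ref{Lovelock divergent free}), and declares the trace identity~(\ref{trace Lovelock}) a straightforward computation. Your approach is instead a self-contained verification from the algebraic symmetries of the Riemann tensor and the generalized Kronecker delta. What you gain is independence from the external references and an argument that makes transparent exactly which symmetry of $R$ is used at each step (pair symmetry for the trace, the second Bianchi identity for the divergence); what the paper gains is brevity, since these facts are classical.

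One remark on the step you flag as the main obstacle: the symmetry is easier than your Laplace-expansion sketch suggests. At any point $p\in M$, work in geodesic normal coordinates so that $g_{ij}(p)=\delta_{ij}$; then $G_{(q)ij}(p)=G_{(q)}{}^{i}{}_{j}(p)$, and the relabelling $a\leftrightarrow b$ you already used for the trace, together with the pair symmetry $R^{ab}_{\ \ cd}=R^{cd}_{\ \ ab}$ and the elementary identity $\delta^{\,i\,a_1\cdots a_{2q}}_{\,j\,b_1\cdots b_{2q}}=\delta^{\,j\,b_1\cdots b_{2q}}_{\,i\,a_1\cdots a_{2q}}$ (a determinant is invariant under transposition), gives $G_{(q)}{}^{i}{}_{j}(p)=G_{(q)}{}^{j}{}_{i}(p)$ and hence $G_{(q)ij}(p)=G_{(q)ji}(p)$. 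Since the identity is tensorial and $p$ is arbitrary, (\ref{G is symmetric}) follows without any delicate bookkeeping. Your parenthetical variational observation would also dispatch (i) and (ii) simultaneously and is in fact close in spirit to Lovelock's own treatment.
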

\begin{proof}
   Identities (\ref{G is symmetric}) and (\ref{Lovelock divergent free}) follow from \cite[Theorem~1]{Lovelock71} (see also \cite{Lovelock70}). Identity (\ref{trace Lovelock}) is a~straightforward computation.
\end{proof}

Let $(E^n,g)$, $n \geq 3$, be an asymptotically Euclidean end of order $\tau > \tau_q$, where $q < n/2$ is a~positive integer. It was shown in \cite{WangWuChernsmagicform} that the $q$-th GBC mass of $(E,g)$ can be computed as follows:
\begin{equation} \label{GBC mass in terms of Lovelock tensor}
  \mathrm{m}_q(E,g) = -b(n,q) \lim_{\rho \to \infty} \int_{S_\rho} G_{(q)} (X, \nu_g) \, {\rm d}S_\rho^g,
\end{equation}
where $X$, $\nu_g$ and ${\rm d}S^g$ are as in (\ref{mass in terms of the Einstein tensor}) and
\begin{equation*}
  b(n,q) = \frac{(n - 2q - 1)!}{2^{q-1}(n-1)! \, \omega_{n-1}}.
\end{equation*}
Note that, when $q=1$, formulas (\ref{GBC mass in terms of Lovelock tensor}) and (\ref{mass in terms of the Einstein tensor}) coincide.

The following proposition establishes sufficient conditions for a vector field
to generate the GBC mass.
\begin{Proposition} \label{mass using another vector field}
   Let $(E^n,g)$, $n \geq 3$, be an asymptotically Euclidean end of order $\tau > \tau_q$, where $q < n/2$ is a positive integer. Let $\Psi(x) = (x_1, x_2, \ldots, x_n)$ be coordinates in $E$ satisfying $(\ref{decay conditions})$ and let $X$ be the vector field given by $(\ref{X})$.
  If $Y$ is a vector field on $E$ such that
  \begin{equation*}
   \bigl(Y^i - X^i\bigr)(x) = O\bigl(\rho^{-\tau + 1}\bigr), \qquad i = 1, 2, \ldots, n,
  \end{equation*}
  as $\rho \to \infty$, then
  \begin{equation*}
   \mathrm{m}_q(E,g) = -b(n,q) \lim_{\rho \to \infty} \int_{S_\rho} G_{(q)} (Y, \nu_g) \, {\rm d}S_\rho^g.
  \end{equation*}
\end{Proposition}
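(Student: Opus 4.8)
The plan is to reduce the statement to formula (\ref{GBC mass in terms of Lovelock tensor}) together with a single order-of-magnitude estimate. Since $G_{(q)}$ is a symmetric $(0,2)$-tensor, it is linear in each slot; writing $Z = Y - X$ we therefore have $G_{(q)}(Y,\nu_g) = G_{(q)}(X,\nu_g) + G_{(q)}(Z,\nu_g)$. By (\ref{GBC mass in terms of Lovelock tensor}) the first term already produces $\mathrm{m}_q(E,g)$, so the whole proposition comes down to showing
\[
 \lim_{\rho \to \infty} \int_{S_\rho} G_{(q)}(Z,\nu_g)\,{\rm d}S_\rho^g = 0,
\]
where, by hypothesis, the components satisfy $Z^i = O(\rho^{-\tau+1})$ for every $i$. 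The conceptual point I want to flag at the outset is that the crude estimate below is \emph{too weak} to prove convergence of the mass integral for $X$ by itself --- there the field grows like $\rho$ and genuine cancellations are needed --- but becomes sufficient for the difference, precisely because $Z = Y - X$ decays where $X$ grows.

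First I would record the relevant decays from (\ref{decay conditions}). The Christoffel symbols satisfy $\Gamma = O(\rho^{-\tau-1})$, and since the dominant contribution to the curvature comes from the second derivatives $g_{ij,kl} = O(\rho^{-\tau-2})$ (the terms quadratic in $\partial g$ being of the faster-decaying order $\rho^{-2\tau-2}$), the Riemann tensor obeys $R = O(\rho^{-\tau-2})$. As $G_{(q)ij}$ is, up to the uniformly bounded factors $g_{ik}$ and the generalized Kronecker delta, a product of $q$ copies of the curvature tensor (with indices raised by the bounded $g^{ij}$), this yields the component bound $G_{(q)ij} = O(\rho^{-q(\tau+2)})$. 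Because $g \to \delta$, the $g$-unit normal is Euclidean-bounded, $\nu_g^j = O(1)$, and the induced area element satisfies ${\rm d}S_\rho^g = \bigl(1 + O(\rho^{-\tau})\bigr)\,{\rm d}S_\rho$, so that $\int_{S_\rho}{\rm d}S_\rho^g = O(\rho^{n-1})$.

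Combining these, the integrand obeys $|G_{(q)}(Z,\nu_g)| = |G_{(q)ij}Z^i\nu_g^j| = O(\rho^{-q(\tau+2)})\,O(\rho^{-\tau+1})\,O(1) = O(\rho^{-q(\tau+2)-\tau+1})$, and hence
\[
 \left| \int_{S_\rho} G_{(q)}(Z,\nu_g)\,{\rm d}S_\rho^g \right| = O\bigl( \rho^{\,n-2q-(q+1)\tau} \bigr).
\]
The exponent is negative exactly when $(q+1)\tau > n - 2q$, that is, when $\tau > \tau_q$, which is precisely the standing hypothesis; the integral therefore tends to $0$, which is what remained to be shown. The step I expect to be the real crux is not any of these routine estimates but the calibration itself: one must notice that the naive product bound fails for $X$ yet succeeds for $Z$, and that the extra gain of $\rho^{-\tau}$ coming from $Z = Y - X = O(\rho^{-\tau+1})$ versus $X = O(\rho)$ is matched \emph{exactly} against the threshold $\tau_q = (n-2q)/(q+1)$, so that no divergence-theorem argument or refined asymptotic expansion of $G_{(q)}$ is required.
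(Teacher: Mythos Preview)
Your proof is correct and follows essentially the same route as the paper: split off the difference $Z=Y-X$, use the curvature decay $R=O(\rho^{-\tau-2})$ to get $G_{(q)ij}=O(\rho^{-q(\tau+2)})$, bound $\nu_g$ and the area element, and conclude that the error integral is $O(\rho^{\,n-2q-(q+1)\tau})\to 0$ precisely when $\tau>\tau_q$. The paper's write-up additionally decomposes $\nu_g=\nu_\delta+O(\rho^{-\tau})$, but since only the $O(1)$ bound on $\nu_g$ is actually used, your streamlined version is equally valid.
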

\begin{proof}
   Let $\Psi(x) = \bigl(x^1, x^2, \ldots, x^n\bigr)$ be coordinates on $E$ satisfying (\ref{decay conditions}). We can use these coordinates to compare $\nu_g$, the unit normal to $S_\rho$ with respect to $g$, and ${\rm d}S_\rho^g$, the volume form induced on $S_\rho$ by $g$, with their Euclidean counterparts $\nu_\delta$ and ${\rm d}S_\rho^\delta$, respectively.

  It holds
  \begin{equation*}
    \nu_g^i - \nu_\delta^i = O(\rho^{-\tau}),\qquad \text{as}\ \rho \to \infty,
  \end{equation*}
  and
  \begin{equation*}
    {\rm d}S_\rho^g = (1+w) \, {\rm d}S_\rho^\delta
  \end{equation*}
  for some function $w\colon E \to \mathbb{R}$ satisfying
  \begin{equation*}
    w = O(\rho^{-\tau}),\qquad \text{as} \ \rho \to \infty.
  \end{equation*}

  Furthermore, the components of the Riemman curvature tensor satisfy
  \begin{equation*}
    R^l_{ijk} = O\bigl(\rho^{-\tau - 2}\bigr), \qquad \text{as} \  \rho \to \infty.
  \end{equation*}
  Together with (\ref{Lovelock tensor}), this gives
  \begin{equation*}
  G_{(q)ij} = O\bigl(\rho^{-q(\tau + 2)}\bigr), \qquad \text{as} \  \rho \to \infty.
  \end{equation*}

  Thus, we have
  \begin{align*}
    &\biggl| \int_{S_\rho} G_{(q)}(Y,\nu_g) \, {\rm d}S_\rho^g - \int_{S_\rho} G_{(q)}(X,\nu_g) \, {\rm d}S_\rho^g \biggr| \\
     &\qquad{} \leq \int_{S_\rho} \bigl| G_{(q)ij}\bigl(Y^i - X^i\bigr)\bigr| \bigl(\bigl|\nu_g^i - \nu_\delta^i\bigr| + \bigl|\nu_\delta^i\bigr|\bigr)(1 + w)\, {\rm d}S_\rho^\delta \\
     &\qquad{} \leq C(n) \rho^{-[q(\tau + 2) + (\tau - 1) - (n-1)]}(1 + \rho^{-\tau})^2,
  \end{align*}
where $C(n)$ is a constant that depends only on the dimension.
  Using that $\tau > \tau_q$, it follows that%
  \begin{equation} \label{limit equals zero}
  \lim_{\rho \to \infty}  \biggl| \int_{S_\rho} G_{(q)}(Y,\nu_g) \, {\rm d}S_\rho^g -\int_{S_\rho} G_{(q)}(X,\nu_g) \, {\rm d}S_\rho^g \biggr| = 0.
  \end{equation}

The proposition follows from (\ref{GBC mass in terms of Lovelock tensor}) and (\ref{limit equals zero}).
\end{proof}

The specialization of this proposition to the case of gradient fields is a key ingredient
in the development of our extrinsic approach.
\begin{Corollary} \label{gradient field}
  Under the same hypothesis as the ones in Proposition $\ref{mass using another vector field}$, if there exists ${f \in C^{\infty}(E)}$ such that
\begin{equation} \label{decay gradient}
  \frac{\partial}{\partial x^i} \biggl( f - \frac{\rho^2}{2} \biggr) = O\bigl(\rho^{-\tau +1}\bigr), \qquad i = 1, 2, \ldots, n,
\end{equation}
as $\rho \to \infty$, then
\begin{equation*}
  \mathrm{m}_q(E,g) = -b(n,q) \lim_{\rho \to \infty} \int_{S_\rho} G_{(q)} \bigl(\nabla f, \nu_g\bigr) \, {\rm d}S_\rho^g.
\end{equation*}
\end{Corollary}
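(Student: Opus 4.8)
The plan is to deduce this directly from Proposition \ref{mass using another vector field} by taking the vector field $Y$ to be the $g$-gradient of $f$, that is, $Y = \nabla f$. Since the conclusion of the corollary is exactly the conclusion of the proposition with this choice of $Y$, the only thing I would need to establish is that $\nabla f$ satisfies the decay hypothesis $(\nabla f)^i - X^i = O(\rho^{-\tau+1})$ in the coordinates $\Psi(x)$.

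To verify this, I would first write the gradient in coordinates as $(\nabla f)^i = g^{ij} \partial_j f$. The crucial elementary observation is that the position field $X$ is the \emph{Euclidean} gradient of $\rho^2/2$: because $\rho^2 = \sum_j (x^j)^2$, one has $\partial_i(\rho^2/2) = x^i = X^i$. I would therefore split the factor $\partial_j f$ as $\partial_j f = x^j + \partial_j\bigl(f - \rho^2/2\bigr)$, where by the hypothesis $(\ref{decay gradient})$ the second summand is $O(\rho^{-\tau+1})$. Combining this with the standard consequence of $(\ref{decay conditions})$ that the inverse metric satisfies $g^{ij} = \delta^{ij} + O(\rho^{-\tau})$, I would substitute both expansions to obtain
\begin{equation*}
 (\nabla f)^i = \bigl(\delta^{ij} + O(\rho^{-\tau})\bigr)\bigl(x^j + O(\rho^{-\tau+1})\bigr) = x^i + O(\rho^{-\tau+1}),
\end{equation*}
which is precisely $(\nabla f)^i - X^i = O(\rho^{-\tau+1})$.

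The only point demanding a little care—and what I would flag as the main (though very mild) obstacle—is the order bookkeeping in the cross term: the product of the $O(\rho^{-\tau})$ error in the inverse metric with the position components $x^j$ does not a priori have the claimed rate, but since $|x^j| \leq \rho$, this term is $O(\rho^{-\tau})\cdot O(\rho) = O(\rho^{-\tau+1})$ and is thus absorbed into the error without degrading it. Once the decay estimate for $Y = \nabla f$ is in hand, Proposition \ref{mass using another vector field} applies verbatim and yields the stated formula for $\mathrm{m}_q(E,g)$.
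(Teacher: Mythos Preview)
Your proposal is correct and follows essentially the same argument as the paper: set $Y=\nabla f$, expand $(\nabla f)^i = g^{ij}\partial_j f$ using $g^{ij}=\delta^{ij}+O(\rho^{-\tau})$ and $\partial_j f = x^j + O(\rho^{-\tau+1})$, observe that the cross term $O(\rho^{-\tau})\cdot x^j$ is $O(\rho^{-\tau+1})$, and invoke Proposition~\ref{mass using another vector field}. The paper's proof is organized slightly differently (it writes $Y^i-X^i=\partial_i(f-\rho^2/2)+\theta^{ij}\partial_j f$ first and then bounds the second term), but the content is identical.
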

\begin{proof}
Write $g^{ij} = \delta^{ij} + \theta^{ij}$ and take $Y = \nabla f$. We have
\begin{align}
  Y^i - X^i& = g^{ij} \frac{\partial f}{\partial x^j} - \frac{\partial}{\partial x^i} \biggl( \frac{\rho^2}{2} \biggr)
  = \delta^{ij} \frac{\partial f}{\partial x^j} + \theta^{ij}\frac{\partial f}{\partial x^j} - \frac{\partial}{\partial x^i} \biggl( \frac{\rho^2}{2} \biggr) \nonumber \\
  & = \frac{\partial}{\partial x^i} \biggl( f - \frac{\rho^2}{2} \biggr) + \theta^{ij}\frac{\partial f}{\partial x^j}. \label{Xi - Yi}
\end{align}

Note that (\ref{decay gradient}) is equivalent to
\begin{equation} \label{decay 1}
  \frac{\partial f}{\partial x^i} = x^i + O\bigl(\rho^{-\tau + 1}\bigr), \qquad \text{as} \  \rho \to \infty.
\end{equation}
Thus, since
\begin{equation} \label{decay 2}
  \theta^{ij} = O( \rho^{-\tau}), \qquad \text{as} \  \rho \to \infty,
\end{equation}
from (\ref{decay 1}) and (\ref{decay 2}) we find
\begin{equation} \label{decay 3}
  \theta^{ij}\frac{\partial f}{\partial x^j} = O\bigl( \rho^{-\tau + 1} \bigr), \qquad \text{as} \  \rho \to \infty.
\end{equation}

The corollary follows from Proposition \ref{mass using another vector field}, (\ref{Xi - Yi}) and (\ref{decay 3}).
\end{proof}

\section{Imersions in Euclidean spaces}

Let $\psi\colon  M^n \looparrowright \mathbb{R}^d$, $d > n$, be a smooth immersion and
let \(\bar{\delta} = \langle \cdot , \cdot \rangle\) be the canonical Euclidean
metric on \(\mathbb{R}^d\). All quantities related to this ambient space will
also be denoted with an overbar, unless otherwise indicated.

Let $B$ (which is normal-vector valued) be the second fundamental form
of the immersion and let $p$ be an integer such that $0 < p \leq n$; if $p$
is even, then the $p$-th mean curvature $S_{(p)} \in C^\infty(M)$ is defined by
\begin{equation*}
   {S}_{(p)} = \frac{1}{p!} \delta_{a_1 \ldots a_p}^{b_1 \ldots b_p}
   \bigl\langle B_{b_1}^{a_1}, B_{b_2}^{a_2} \bigr\rangle \cdots \bigl\langle B_{b_{p-1}}^{a_{p-1}}, B_{b_p}^{a_p} \bigr\rangle,
\end{equation*}
and, if $p$ is odd, then the $p$-th mean curvature $S_{(p)} \in {\Gamma\bigl(TM^{\perp}\bigr)}$ is defined by
\begin{equation*}
   {S}_{(p)} = \frac{1}{p!} \delta_{a_1 \ldots a_p}^{b_1 \ldots b_p}
   \bigl\langle B_{b_1}^{a_1}, B_{b_2}^{a_2} \bigr\rangle \cdots \bigl\langle B_{b_{p-2}}^{a_{p-2}}, B_{b_{p-1}}^{a_{p-1}} \bigr\rangle B_{b_p}^{a_p},
\end{equation*}
where $\langle \cdot , \cdot \rangle$ denotes the Euclidean metric on $\mathbb{R}^d$.
We also set $S_{(0)} = 1$ and $S_{(n+1)} = 0$.

Next, we turn to the so-called Newton transformations. The \(0\)-th Newton transformation is defined as
\begin{equation*}
  T_{(0)} = g,
\end{equation*}
where \(g = \psi^\ast \bar{\delta}\) denotes the induced metric. If $p \geq 2$ is even,
then the $p$-th Newton transformation~$T_{(p)}$ is defined by
\begin{equation} \label{NT for even p}
T_{(p)ij} = \frac{1}{p!}g_{ik}\delta_{ja_1a_2\ldots a_p}^{kb_1b_2\ldots b_p}\bigl\langle B_{b_1}^{a_1}, B_{b_2}^{a_2} \bigr\rangle \cdots \bigl\langle B_{b_{p-1}}^{a_{p-1}}, B_{b_p}^{a_p} \bigr\rangle,
\end{equation}
and, if $p$ is odd, then the $p$-th Newton transformation $T_{(p)}$ is defined by
\begin{equation*}
T_{(p)ij} = \frac{1}{p!}g_{ik}\delta_{ja_1a_2\ldots a_{p-1}}^{kb_1b_2\ldots b_{p-1}}\bigl\langle B_{b_1}^{a_1}, B_{b_2}^{a_2} \bigr\rangle \cdots \bigl\langle B_{b_{p-2}}^{a_{p-2}}, B_{b_{p-1}}^{a_{p-1}}\bigr\rangle B_{b_p}^{a_p}.
\end{equation*}
Note that, when \(p\) is odd, the \(p\)-th Newton transformation is normal-vector
valued. Furthermore, by antisymmetry, it follows that \(T_{(p)} \equiv 0\)
when \(p > n\).

The relation between the trace of Newton transformations and the higher-order
mean curvatures is a fact well-known in the literature (see
\cite[Lemma 2.2]{GrosjeanUpperboundsfirst2002}, for example).
\begin{Proposition} 
  Let $p$ be an integer such that $0 \leq p \leq n$. The Newton transformation $T_{(p)}$ satisfy
  \begin{equation} \label{trace T}
    \mathrm{tr}_gT_{(p)} = (n-p)S_{(p)}.
  \end{equation}
 Moreover, if $p$ is even, then
 \begin{equation*} 
   T_{(p)ij}B^{ij} = (p+1)S_{(p+1)}.
 \end{equation*}
\end{Proposition}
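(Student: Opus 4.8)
The plan is to reduce both identities to the elementary contraction properties of the generalized Kronecker delta, so that no geometry beyond the definitions of $S_{(p)}$ and $T_{(p)}$ is needed. The only fact I would invoke is that $\delta^{b_1\ldots b_m}_{a_1\ldots a_m}$ is totally antisymmetric in its upper and (separately) in its lower indices and obeys the contraction rule
\[
  \delta^{k b_1\ldots b_p}_{k a_1\ldots a_p} = (n-p)\,\delta^{b_1\ldots b_p}_{a_1\ldots a_p},
\]
the repeated index $k$ running over $1,\ldots,n$. In both definitions everything other than the delta is a symmetric \emph{payload} — a product of inner products $\bigl\langle B^{a}_{b},B^{c}_{d}\bigr\rangle$ together with at most one extra factor $B^{a}_{b}$ — which is carried along unchanged, so the combinatorics is entirely controlled by the delta.

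For the trace identity I would first dispose of the base case $p=0$: since $T_{(0)}=g$ and $S_{(0)}=1$, we get $\mathrm{tr}_g T_{(0)} = g^{ij}g_{ij} = n = (n-0)S_{(0)}$. For $1\le p\le n$ I would contract the defining formula with $g^{ij}$ and use $g^{ij}g_{ik}=\delta^j_k$; this replaces the upper index $k$ on the delta by $j$ and yields the contracted delta $\delta^{j b_1\ldots b_p}_{j a_1\ldots a_p}$. Applying the contraction rule produces the factor $(n-p)$ and leaves exactly the delta and payload defining $S_{(p)}$, so $\mathrm{tr}_g T_{(p)}=(n-p)S_{(p)}$. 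The argument is verbatim the same for $p$ even and $p$ odd; the only difference is whether the payload ends in an inner product or in a lone $B^a_b$.

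For the second identity (with $p$ even) I would contract $B^{ij}$ against the free indices of $T_{(p)ij}$, using the metric factor $g_{ik}$ already present in the definition to form the mixed tensor $B^j_k=g_{ik}B^{ij}$. This tensor then meets precisely the index pair $(k,j)$ that the generalized delta of $T_{(p)}$ carries in addition to the $p$ pairs $(b_s,a_s)$. Reading $(k,j)$ as a fresh pair $(b_{p+1},a_{p+1})$ and performing a simultaneous cyclic relabelling of the upper and the lower indices of the delta (which leaves it invariant, since the two sign changes cancel), the whole expression becomes
\[
  \frac{1}{p!}\,\delta^{b_1\ldots b_{p+1}}_{a_1\ldots a_{p+1}}
  \bigl\langle B^{a_1}_{b_1},B^{a_2}_{b_2}\bigr\rangle\cdots
  \bigl\langle B^{a_{p-1}}_{b_{p-1}},B^{a_p}_{b_p}\bigr\rangle B^{a_{p+1}}_{b_{p+1}}.
\]
This is identical to the definition of $S_{(p+1)}$ save for the prefactor $\frac1{p!}$ in place of $\frac1{(p+1)!}$, whence $T_{(p)ij}B^{ij}=(p+1)S_{(p+1)}$.

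The one genuinely delicate point is this last step, and it is purely bookkeeping: I must check that absorbing the new index pair into the delta and reordering it to canonical position introduces no spurious sign, and that the count of inner-product factors is right. The sign issue is settled by the remark that applying one and the same permutation to the upper and lower indices of a generalized Kronecker delta leaves it unchanged. The parity is automatically consistent: since $p$ is even, $p+1$ is odd, so $S_{(p+1)}$ is normal-vector valued and, by definition, terminates in a single factor $B^{a_{p+1}}_{b_{p+1}}$ — exactly the factor produced by lowering $B^{ij}$.
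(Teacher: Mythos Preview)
Your argument is correct. Both identities follow directly from the contraction rule for the generalized Kronecker delta exactly as you describe: for the trace, contracting $g^{ij}$ against the factor $g_{ik}$ turns the first index pair of the delta into a traced pair and produces the factor $(n-p)$; for the second identity, the extra factor $B^{j}_{k}=g_{ik}B^{ij}$ supplies a $(p{+}1)$-st index pair, the simultaneous cyclic shift of upper and lower indices is sign-free, and the ratio $(p{+}1)!/p!$ yields the coefficient $(p{+}1)$. Your remark on parity (that $S_{(p+1)}$ is normal-vector valued and ends in a lone $B$ when $p$ is even) is the right consistency check.

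The paper itself gives no proof of this proposition: it simply records the identities as well known and refers to \cite[Lemma~2.2]{GrosjeanUpperboundsfirst2002}. What you have written is essentially the standard direct computation one finds in that reference, so there is nothing to compare at the level of strategy.
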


By a direct application of the Gauss equation, we obtain that the \(q\)-th
Lovelock tensor (\ref{Lovelock tensor}) of the induced metric
\(g = \psi^\ast \bar{\delta}\) and the \(2q\)-th Newton transformation
(\ref{NT for even p}) of an immersion contain the same information.
\begin{Proposition} 
  Let $q$ be a positive integer such that $q < n/2$. It holds
  \begin{equation} \label{equation relation G and T}
    G_{q} = - \frac{(2q)!}{2}T_{(2q)}.
  \end{equation}
\end{Proposition}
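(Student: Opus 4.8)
The plan is to substitute the Gauss equation into the definition (\ref{Lovelock tensor}) of $G_{(q)}$ and then recognize the outcome as a multiple of the Newton transformation (\ref{NT for even p}). Since the ambient space $\mathbb{R}^d$ is flat, the Gauss equation for the immersion $\psi$ expresses the curvature of $g=\psi^\ast\bar\delta$ purely in terms of the second fundamental form $B$: for each $s$,
\begin{equation*}
  R_{b_{2s-1}b_{2s}}^{a_{2s-1}a_{2s}} = \bigl\langle B_{b_{2s-1}}^{a_{2s-1}}, B_{b_{2s}}^{a_{2s}} \bigr\rangle - \bigl\langle B_{b_{2s-1}}^{a_{2s}}, B_{b_{2s}}^{a_{2s-1}} \bigr\rangle.
\end{equation*}
Inserting this identity into each of the $q$ curvature factors appearing in (\ref{Lovelock tensor}) is the first step.

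The key step is to contract the result with the generalized Kronecker delta $\delta_{ja_1\ldots a_{2q}}^{kb_1\ldots b_{2q}}$, which is totally antisymmetric in its lower indices and, separately, in its upper indices. I would argue factor by factor, since the pair $a_{2s-1},a_{2s}$ appears only in the $s$-th curvature factor and in the delta. In the cross term, relabelling the dummy pair $a_{2s-1}\leftrightarrow a_{2s}$ turns $\langle B_{b_{2s-1}}^{a_{2s}}, B_{b_{2s}}^{a_{2s-1}}\rangle$ into $\langle B_{b_{2s-1}}^{a_{2s-1}}, B_{b_{2s}}^{a_{2s}}\rangle$, while the delta acquires a factor $-1$ from the transposition of two of its lower indices. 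Hence, under the contraction, the cross term equals the diagonal term, and each curvature factor contributes $2\langle B_{b_{2s-1}}^{a_{2s-1}}, B_{b_{2s}}^{a_{2s}}\rangle$. Iterating over $s=1,\ldots,q$ produces an overall factor of $2^q$, so that
\begin{equation*}
  \delta_{ja_1\ldots a_{2q}}^{kb_1\ldots b_{2q}}\prod_{s=1}^{q}R_{b_{2s-1}b_{2s}}^{a_{2s-1}a_{2s}} = 2^q\,\delta_{ja_1\ldots a_{2q}}^{kb_1\ldots b_{2q}}\prod_{s=1}^{q}\bigl\langle B_{b_{2s-1}}^{a_{2s-1}}, B_{b_{2s}}^{a_{2s}} \bigr\rangle.
\end{equation*}

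It then remains to compare constants. Plugging the last identity into (\ref{Lovelock tensor}) replaces the prefactor $1/2^{q+1}$ by $2^q/2^{q+1}=1/2$, yielding $G_{(q)ij} = -\frac{1}{2} g_{ik}\delta_{ja_1\ldots a_{2q}}^{kb_1\ldots b_{2q}}\prod_{s=1}^q\langle B_{b_{2s-1}}^{a_{2s-1}}, B_{b_{2s}}^{a_{2s}}\rangle$, whereas the definition (\ref{NT for even p}) with $p=2q$ gives precisely the same contraction up to the factor $1/(2q)!$. Dividing one expression by the other yields $G_{(q)} = -\frac{(2q)!}{2}T_{(2q)}$, as claimed. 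I expect the only delicate point to be the sign bookkeeping in the middle paragraph: one must check that the minus sign of the Gauss equation is exactly absorbed by the antisymmetry of the Kronecker delta, so that the two Gauss terms \emph{add} rather than cancel. The symmetry (\ref{G is symmetric}) of $G_{(q)}$ and the manifest symmetry of $T_{(2q)}$ in $i,j$ serve as a useful consistency check on the computation.
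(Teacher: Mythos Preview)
Your proposal is correct and follows exactly the route the paper indicates: the paper simply states that the identity follows ``by a direct application of the Gauss equation,'' and your argument spells this out---substituting the Gauss equation into (\ref{Lovelock tensor}), using the antisymmetry of the generalized Kronecker delta to collapse each curvature factor to $2\langle B_{b_{2s-1}}^{a_{2s-1}}, B_{b_{2s}}^{a_{2s}}\rangle$, and matching constants against (\ref{NT for even p}). The sign bookkeeping you flag as ``delicate'' is handled correctly.
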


The next lemma is an infinitesimal version of a Pohozaev--Schoen-type integral
identity presented in \cite[Propositon 3.2]{deLimamasstermsEinstein2019}.
\begin{Lemma} 
  If $K$ and $V$ are, respectively, a symmetric bilinear form and a vector field on $M$, then the following identity holds:
  \begin{equation} \label{equation div V interno K}
    \mathrm{div}_g (\iota_VK) = \iota_V(\mathrm{div}_gK) + \frac{1}{2}g(K, \mathcal{L}_Vg).
  \end{equation}
\end{Lemma}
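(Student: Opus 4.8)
The plan is to verify the identity pointwise in a local coordinate frame, working with the Levi-Civita connection $\nabla$ of $g$ and using metric compatibility throughout. I read $\iota_V K$ as the $1$-form with components $(\iota_V K)_j = V^i K_{ij}$ (the symmetry of $K$ makes the choice of slot immaterial), the divergence of a $1$-form $\omega$ as $\mathrm{div}_g \omega = \nabla^j \omega_j = g^{jk}\nabla_k \omega_j$, the divergence of the symmetric $2$-tensor $K$ as the $1$-form $(\mathrm{div}_g K)_i = \nabla^j K_{ji}$, and the pairing as $g(K,\mathcal{L}_V g) = K^{ij}(\mathcal{L}_V g)_{ij}$. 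With these conventions both sides of (\ref{equation div V interno K}) are scalar functions, and it suffices to match them index by index.

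First I would apply the Leibniz rule to the left-hand side:
\begin{equation*}
  \mathrm{div}_g(\iota_V K) = \nabla^j\bigl(V^i K_{ij}\bigr) = \bigl(\nabla^j V^i\bigr)K_{ij} + V^i\,\nabla^j K_{ij}.
\end{equation*}
The second summand is already $\iota_V(\mathrm{div}_g K)$, since the symmetry of $K$ gives $V^i\nabla^j K_{ij} = V^i\nabla^j K_{ji} = V^i(\mathrm{div}_g K)_i$. The task therefore reduces to identifying the first summand $\bigl(\nabla^j V^i\bigr)K_{ij}$ with $\tfrac12 g(K,\mathcal{L}_V g)$.

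For this I would invoke the standard expression for the Lie derivative of the metric in terms of covariant derivatives, $(\mathcal{L}_V g)_{ij} = \nabla_i V_j + \nabla_j V_i$, where $V_j = g_{jk}V^k$; this is exactly where the symmetrization (and hence the factor $\tfrac12$) enters. Contracting against $K$ and using that $K$ is symmetric collapses the two terms into one, yielding
\begin{equation*}
  \tfrac12\,g(K,\mathcal{L}_V g) = \tfrac12\,K^{ij}\bigl(\nabla_i V_j + \nabla_j V_i\bigr) = K^{ij}\nabla_i V_j.
\end{equation*}
A short index manipulation, again using metric compatibility to move the metric past $\nabla$ and the symmetry of $K$, shows $K^{ij}\nabla_i V_j = \bigl(\nabla^j V^i\bigr)K_{ij}$, which closes the argument.

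The computation presents no genuine obstacle; the only points requiring care are bookkeeping ones — fixing the conventions for $\iota_V$, for the two divergences, and for the pairing $g(\cdot,\cdot)$ on symmetric $2$-tensors, and then correctly tracking the factor $\tfrac12$ produced by the symmetrization in $(\mathcal{L}_V g)_{ij} = \nabla_i V_j + \nabla_j V_i$. Once these are pinned down, the identity follows from a single application of the Leibniz rule together with the symmetry of $K$.
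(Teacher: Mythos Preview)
Your proof is correct: the Leibniz rule splits $\mathrm{div}_g(\iota_V K)$ into $V^i\nabla^j K_{ij}=\iota_V(\mathrm{div}_g K)$ and $(\nabla^j V^i)K_{ij}$, and the symmetry of $K$ together with $(\mathcal{L}_V g)_{ij}=\nabla_i V_j+\nabla_j V_i$ identifies the latter with $\tfrac12 g(K,\mathcal{L}_V g)$. The paper itself does not supply a proof of this lemma; it merely states it, describing it as an infinitesimal version of a Pohozaev--Schoen-type identity from \cite{deLimamasstermsEinstein2019}, so your coordinate computation is exactly the kind of direct verification one would give to fill that gap.
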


Throughout the text, we denote by \(\bar{Z}\) the vector field on
$\mathbb{R}^d$ given by
\begin{equation*}
 \bar{Z} = \bar{x}^{\alpha} \frac{\partial}{\partial \bar{x}^{\alpha}},
 \qquad 1 \le \alpha \le d,
\end{equation*}
where \(\bigl( \bar{x}^1, \bar{x}^2, \ldots, \bar{x}^d \bigr)\)
is the standard coordinate system on \(\mathbb{R}^d\). It is known that
it is a conformal Killing gradient field and that it satisfies the following identity:
\begin{equation*}
 \bar{Z} = \frac{\overline{\nabla}\bar{\rho}^2}{2},
\end{equation*}
where \(\bar{\rho}\) is the distance function to origin on \(\mathbb{R}^d\).

The following proposition can be interpreted as an infinitesimal version
of the integral identity of Theorem \ref{main theorem}. It is inspired by a
infinitesimal version of the flux formula presented in~\mbox{\cite[equation (8.4)]{AliasConstanthigherordermean2006}}.

\begin{Proposition}
 Let $Y = \psi^\ast \bar{Z}^\top$, where $\bar{Z}^\top$ denotes the tangent part of the vector field \(\bar{Z}\). It~holds
  \begin{equation} \label{div Y interno G}
   \mathrm{div}_g(\iota_YG_{(q)}) = -\frac{(2q)!}{2} \bigl[ (n-2q)S_{2q} + (2q + 1) \bigl\langle S_{2q+1}, \bar{Z} \bigr\rangle \bigr].  \end{equation}
\end{Proposition}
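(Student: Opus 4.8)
The plan is to reduce the left-hand side to a pointwise algebraic contraction and then evaluate it with the trace identities for the Newton transformations. First I would apply the infinitesimal Pohozaev--Schoen identity (\ref{equation div V interno K}) with $K = G_{(q)}$ and $V = Y$, obtaining
\begin{equation*}
\mathrm{div}_g(\iota_Y G_{(q)}) = \iota_Y(\mathrm{div}_g G_{(q)}) + \frac{1}{2} g(G_{(q)}, \mathcal{L}_Y g).
\end{equation*}
Since $G_{(q)}$ is divergence-free by (\ref{Lovelock divergent free}), the first term vanishes, and the whole problem reduces to computing the full contraction $g(G_{(q)}, \mathcal{L}_Y g) = G_{(q)}^{ij}(\mathcal{L}_Y g)_{ij}$.

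The geometric heart of the argument is the computation of $\mathcal{L}_Y g$, and this is where I expect the main difficulty to lie, since it demands careful bookkeeping of tangential and normal projections. The key input is that $\bar{Z}$ is the Euclidean position field, so $\overline{\nabla}_U \bar{Z} = U$ for every $U$. Applying this to $U$ tangent to $M$ and splitting $\bar{Z}$ into its tangent part $\bar{Z}^\top$ and its normal part $\bar{Z}^\perp$, the Gauss and Weingarten formulas give
\begin{equation*}
U = \overline{\nabla}_U \bar{Z} = \bigl(\nabla_U \bar{Z}^\top - A_{\bar{Z}^\perp} U\bigr) + \bigl(B(U, \bar{Z}^\top) + \nabla^\perp_U \bar{Z}^\perp\bigr),
\end{equation*}
where $A_{\bar{Z}^\perp}$ is the shape operator in the direction $\bar{Z}^\perp$ and $\nabla^\perp$ is the normal connection. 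Comparing tangential parts and recalling $Y = \bar{Z}^\top$ yields $\nabla_U Y = U + A_{\bar{Z}^\perp} U$. Since the shape operator is self-adjoint and $\langle A_{\bar{Z}^\perp} U, V\rangle = \langle B(U,V), \bar{Z}^\perp\rangle = \langle B(U,V), \bar{Z}\rangle$ (the last equality holding because $B$ is normal-valued), I would conclude
\begin{equation*}
(\mathcal{L}_Y g)(U,V) = g(\nabla_U Y, V) + g(\nabla_V Y, U) = 2g(U,V) + 2\langle B(U,V), \bar{Z}\rangle,
\end{equation*}
that is, $(\mathcal{L}_Y g)_{ij} = 2g_{ij} + 2\langle B_{ij}, \bar{Z}\rangle$.

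Finally, I would substitute this into the contraction and invoke the dictionary between the Lovelock tensor and the Newton transformations. This produces
\begin{equation*}
g(G_{(q)}, \mathcal{L}_Y g) = 2\,\mathrm{tr}_g G_{(q)} + 2\bigl\langle G_{(q)}^{ij} B_{ij}, \bar{Z}\bigr\rangle.
\end{equation*}
Using $G_{(q)} = -\frac{(2q)!}{2} T_{(2q)}$ from (\ref{equation relation G and T}), the trace identity $\mathrm{tr}_g T_{(2q)} = (n-2q) S_{(2q)}$ from (\ref{trace T}), and $T_{(2q)ij} B^{ij} = (2q+1) S_{(2q+1)}$, the trace term equals $-(2q)!(n-2q) S_{(2q)}$ and the second term equals $-(2q)!(2q+1)\langle S_{(2q+1)}, \bar{Z}\rangle$. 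Dividing by $2$ gives exactly (\ref{div Y interno G}). The remaining work is only the routine verification of the index conventions in the two contractions; the genuinely geometric content is confined to the Lie-derivative computation above.
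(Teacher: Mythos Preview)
Your proposal is correct and follows essentially the same route as the paper: apply the Pohozaev--Schoen identity~(\ref{equation div V interno K}), use that $G_{(q)}$ is divergence-free, compute $\mathcal{L}_Y g = 2g + 2B_{\bar Z}$, and then translate via $G_{(q)} = -\tfrac{(2q)!}{2}T_{(2q)}$ together with the trace identities. The only cosmetic difference is that the paper obtains $\mathcal{L}_Y g$ by splitting $\mathcal{L}_{\bar Z}\bar\delta - \mathcal{L}_{\bar Z^\perp}\bar\delta$ and pulling back, whereas you use $\overline{\nabla}_U\bar Z = U$ and the Gauss--Weingarten decomposition directly; both are standard and yield the same intermediate formula.
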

\begin{proof}
  Since $Y = \psi^\ast \bar{Z}^\top$, we have
\begin{equation*}
  \psi_\ast Y = \bar{Z}^\top = \bar{Z} - \bar{Z}^\perp
\end{equation*}
and
\begin{align}
\mathcal{L}_Yg  = \mathcal{L}_Y\psi^\ast \bar{\delta}
 = \psi^\ast \bigl( \mathcal{L}_{\psi_\ast Y}\bar{\delta}\bigr)
 = \psi^\ast \bigl( \mathcal{L}_{(\bar{Z} - \bar{Z}^\perp)} \bar{\delta} \bigr)
 = \psi^\ast \bigl( \mathcal{L}_{\bar{Z}} \bar{\delta}\bigr) - \psi^\ast \bigl( \mathcal{L}_{\bar{Z}^\perp}\bar{\delta}\bigr). \label{Lie de g na direcao de Y}
\end{align}

Denote by $B_{\bar{Z}}$ the symmetric bilinear form on $M$ defined by
\begin{equation*}
 B_{\bar{Z}}(V,W) = \bigl\langle B(V,W), \bar{Z} \bigr\rangle.
\end{equation*}
For any $V,W \in \Gamma(TM)$,
\begin{align*}
 \bigl( \mathcal{L}_{\bar{Z}^\perp} \bar{\delta} \bigr) (V,W) = \bigl\langle D_V \bar{Z}^\perp, W \bigr\rangle + \bigl\langle V, D_W \bar{Z}^\perp \bigr\rangle = - 2 \bigl\langle B(V,W), \bar{Z} \bigr\rangle
  = - 2B_{\bar{Z}}(V,W),
\end{align*}
that is,
\begin{equation} \label{Lie derivative bar Z-perp}
 \mathcal{L}_{\bar{Z}^\perp} \bar{\delta} = -2B_{\bar{Z}}.
\end{equation}
We also have
\begin{equation} \label{Lie derivative bar Z}
 \mathcal{L}_{\bar{Z}}\bar{\delta} = 2\bar{\delta}.
\end{equation}
Thus, (\ref{Lie de g na direcao de Y})--(\ref{Lie derivative bar Z}) yield
\begin{equation} \label{produto interno de G com Lie de g na direcao de Y}
 g(G_{(q)}, \mathcal{L}_Yg) = 2 \, \mathrm{tr}_gG_{(q)} + 2g  (G_{(q)},B_{\bar{Z}} ).
\end{equation}

Identity (\ref{div Y interno G}) follows from (\ref{Lovelock divergent free}), (\ref{trace T})--(\ref{equation div V interno K}) and (\ref{produto interno de G com Lie de g na direcao de Y}).
\end{proof}

\subsection{Asymptotically Euclidean imersions}

Next, we describe our main object of study: a class of smooth immersions
that place certain smooth manifolds \(M^n\) into some Euclidean space
\(\mathbb{R}^d\), where \(d > n\), in a special way.

\begin{Definition}[asymptotically Euclidean immersion] \label{defn AE}
 Let \({\psi\colon M^n \looparrowright \mathbb{R}^d}\), \(d > n\), be a smooth
 immersion of a smooth manifold \(M^n\), \(n \geq 3\). We say that
 \(\psi\) is an asymptotically Euclidean immersion of order \(\tau\), for some
 \(\tau > 0\), if the following conditions hold:
\begin{itemize}\itemsep=0pt
  \item[(i)] the Riemannian manifold $\bigl(M,\psi^*{\bar{\delta}}\bigr)$ is complete and asymptotically Euclidean of order $\tau$;
  \item[(ii)] if $E \subset M$ is such that $(E,\psi^*{\bar{\delta}})$ is an asymptotically Euclidean end of order \(\tau\), then, as~${\rho \to \infty}$,
  \[\frac{\partial}{\partial x^i}\bigl( \psi^\ast\bar{\rho}^2 - \rho^2 \bigr)(x) = O\bigl(\rho^{-\tau+1}\bigr), \qquad i=1,2, \ldots, n,\]
  for any coordinate system $\Psi(x) = \bigl(x^1, x^2, \ldots, x^n\bigr)$ on $E$ satisfying (\ref{decay conditions}).
\end{itemize}
\end{Definition}

The following theorem is the main result of this article. It can be seen as a version of the Hsiung--Minkowski formulas \cite{Hsiungintegralformulasclosed1956,KwongExtensionHsiungMinkowski2016} to asymptotically Euclidean immersions.

\begin{Theorem} \label{main theorem}
 Let $n$ and $q$ be integers such that $n \geq 3$ and $0 < q < n/2$.
 If $\psi\colon  M^n \hookrightarrow \bigl(\mathbb{R}^d, \bar{\delta}\bigr)$, $d > n$, is an
 asymptotically Euclidean immersion of order $\tau > \tau_q$ such that
 \(L_{(q)}\), the $q$-th Gauss-Bonnet curvature of the induced metric
 \(\psi^{\ast}\bar{\delta}\), is integrable, then the functions $S_{2q}$
 and $\bigl\langle S_{2q+1},\bar{Z} \bigr\rangle$ are integrable
 and the $q$-th Gauss--Bonnet--Chern mass of $\bigl(M, \psi^\ast\bar{\delta}\bigr)$ satisfies
 the following integral identity:
 \begin{equation*}
  \mathrm{m}_q\bigl(M,\psi^*\bar{\delta}\bigr) = a(n,q) \biggl[
   (n-2q) \int_M S_{2q} \, {\rm d}M
   + (2q+1) \int_M \langle S_{2q+1}, \bar{Z} \rangle \, {\rm d}M \biggr],
 \end{equation*}
 where \({\rm d}M\) is the Riemannian measure on \(\bigl(M, \psi^{\ast}\bar{\delta}\bigr)\) and
 \begin{equation*}
  a(n,q) = \frac{(2n)!(n-2q-1)!}{2^q(n-1)! \, \omega_{n-1}}.
 \end{equation*}
\end{Theorem}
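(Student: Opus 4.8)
The plan is to run the argument one end at a time, starting from the Lovelock-tensor expression (\ref{GBC mass in terms of Lovelock tensor}) for the $q$-th GBC mass, converting the boundary flux into a bulk integral via the divergence theorem, and then identifying the bulk integrand through the divergence identity (\ref{div Y interno G}); summing over the finitely many ends will produce the global identity. First I would fix an end $E$ and take $f = \tfrac12\,\psi^{\ast}\bar{\rho}^{2}$. Condition (ii) of Definition \ref{defn AE} says precisely that $\partial_i\bigl(f - \rho^2/2\bigr) = O(\rho^{-\tau+1})$, which is the hypothesis of Corollary \ref{gradient field}; since $\bar{Z} = \tfrac12\,\overline{\nabla}\bar{\rho}^{2}$, the induced gradient is $\nabla f = \psi^{\ast}\bar{Z}^{\top} = Y$, the field appearing in (\ref{div Y interno G}). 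The corollary then gives
\[
  \mathrm{m}_q(E,g) = -\,b(n,q)\lim_{\rho\to\infty}\int_{S_\rho}\iota_Y G_{(q)}(\nu_g)\,{\rm d}S_\rho^{g}.
\]

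Next I would exhaust $M$ by the compact regions $\Omega_\rho$ cut off, in every end, at Euclidean coordinate radius $\rho$, so that $\partial\Omega_\rho$ is the disjoint union of the spheres $S_\rho$, each carrying the outward (toward-infinity) normal $\nu_g$. Summing the previous display over the finitely many ends and applying the divergence theorem yields
\[
  \mathrm{m}_q\bigl(M,\psi^{\ast}\bar{\delta}\bigr) = -\,b(n,q)\lim_{\rho\to\infty}\int_{\Omega_\rho}\mathrm{div}_g\bigl(\iota_Y G_{(q)}\bigr)\,{\rm d}M,
\]
and substituting the pointwise identity (\ref{div Y interno G}) turns the integrand into $-\tfrac{(2q)!}{2}\bigl[(n-2q)S_{2q} + (2q+1)\langle S_{2q+1},\bar{Z}\rangle\bigr]$. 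The two sign reversals combine $-b(n,q)$ with $-\tfrac{(2q)!}{2}$ into the positive constant $a(n,q) = \tfrac{(2q)!}{2}\,b(n,q)$, matching the stated value.

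It remains to justify integrability and the interchange of limit and integral. For $S_{2q}$ this is immediate: combining (\ref{trace Lovelock}) with (\ref{equation relation G and T}) and (\ref{trace T}) gives both $\mathrm{tr}_g G_{(q)} = -\tfrac{n-2q}{2}L_{(q)}$ and $\mathrm{tr}_g G_{(q)} = -\tfrac{(2q)!}{2}(n-2q)S_{2q}$, whence $S_{2q} = L_{(q)}/(2q)!$, which is integrable by hypothesis. The delicate point — and the step I expect to require the most care — is the integrability of $\langle S_{2q+1},\bar{Z}\rangle$, a genuinely extrinsic quantity that is not controlled by the intrinsic curvature alone. I would establish it by decay estimates: the asymptotically Euclidean structure of the immersion (the image being asymptotically planar, via the metric decay (\ref{decay conditions})) forces the second fundamental form to satisfy $|B| = O(\rho^{-(\tau+2)/2})$, while comparing $|Y|_g^{2}$ with $\psi^{\ast}\bar{\rho}^{2}$, using $\partial_i f = x^i + O(\rho^{-\tau+1})$, shows that the normal part of the position field obeys $|\bar{Z}^{\perp}| = O(\rho^{1-\tau/2})$.

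Since $S_{2q+1}$ is built from $2q+1$ copies of $B$ and $\langle S_{2q+1},\bar{Z}\rangle = \langle S_{2q+1},\bar{Z}^{\perp}\rangle$, these two bounds give $\langle S_{2q+1},\bar{Z}\rangle = O\bigl(\rho^{-(q+1)\tau - 2q}\bigr)$, which is exactly integrable over an $n$-dimensional end precisely when $\tau > \tau_q$; this is where the threshold $\tau_q$ enters the argument. With both integrands absolutely integrable, dominated convergence legitimizes replacing $\Omega_\rho$ by $M$ in the limit, and the identity follows. The main obstacle is thus the control of the extrinsic factors $B$ and $\bar{Z}^{\perp}$ near infinity; everything else is an assembly of the already-established identities (\ref{equation div V interno K}), (\ref{Lovelock divergent free}), (\ref{div Y interno G}) together with the divergence theorem.
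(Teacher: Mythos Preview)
Your overall architecture matches the paper's proof exactly: fix an end, set $f=\tfrac12\psi^{\ast}\bar\rho^{2}$, invoke Corollary~\ref{gradient field} to write the mass as a flux of $\iota_Y G_{(q)}$ with $Y=\nabla f=\psi^{\ast}\bar Z^{\top}$, apply the divergence theorem over an exhaustion, and plug in the divergence identity~(\ref{div Y interno G}). Your constant computation $a(n,q)=\tfrac{(2q)!}{2}\,b(n,q)$ is also correct (the $(2n)!$ in the paper's statement is evidently a misprint for $(2q)!$).

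Where you diverge is in the integrability of $\langle S_{2q+1},\bar Z\rangle$, and here your proposed route has a genuine gap. The estimate $|B|=O\bigl(\rho^{-(\tau+2)/2}\bigr)$ does \emph{not} follow from Definition~\ref{defn AE}: that definition controls only the induced metric $\psi^{\ast}\bar\delta$ and the gradient of $\psi^{\ast}\bar\rho^{2}-\rho^{2}$, neither of which bounds the second fundamental form in higher codimension. The Gauss equation gives $R_{ijkl}=\langle B_{ik},B_{jl}\rangle-\langle B_{il},B_{jk}\rangle=O(\rho^{-\tau-2})$, but intrinsic flatness is compatible with $|B|$ being large (think of a cylinder in $\mathbb R^{n+2}$, or any flat non-planar submanifold). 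Your estimate on $|\bar Z^{\perp}|$ is similarly fragile: integrating condition~(ii) leaves an undetermined additive constant in $\psi^{\ast}\bar\rho^{2}-\rho^{2}$, so $|\bar Z^{\perp}|$ need not tend to zero.

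The paper sidesteps all of this with a one-line subtraction argument you overlooked: once the divergence theorem yields
\[
\mathrm{m}_q\bigl(M,\psi^{\ast}\bar\delta\bigr)=a(n,q)\lim_{\rho\to\infty}\int_{\Omega_\rho}\bigl[(n-2q)S_{2q}+(2q+1)\langle S_{2q+1},\bar Z\rangle\bigr]\,{\rm d}M,
\]
the left side is finite (the mass is well defined under $\tau>\tau_q$ and $L_{(q)}\in L^{1}$), and $S_{2q}=L_{(q)}/(2q)!$ is integrable by hypothesis; hence the remaining term has a finite limit of integrals, which is all ``integrable'' means here. No extrinsic decay estimates are needed. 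Replace your last two paragraphs with this observation and the proof is complete.
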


\begin{proof}
 Let $E$ be one of the ends of $M$. Consider the function $f\colon M \to \mathbb{R}$, with
 \begin{equation*}
  f = \frac{\psi^\ast \bar{\rho}^2}{2}.
 \end{equation*}
 Since $\psi\colon  M^n \looparrowright \mathbb{R}^d$ is an asymptotically Euclidean immersion of order $\tau > \tau_q$, the function $f$ satisfies (\ref{decay gradient}), and hence, by Corollary \ref{gradient field},
 \begin{equation} \label{eq 1}
  \mathrm{m}_q \bigl(E,\psi^\ast \bar{\delta}\bigr) = -b(n,q) \lim_{\rho \to \infty} \int_{S_\rho} G_{(q)}(\nabla f,\nu)\, {\rm d}S_\rho,
 \end{equation}
 where the geometric quantities in the integral are computed with respect
 the induced metric~\(\psi^\ast \bar{\delta}\).

 Note that
 \begin{equation*}
  \nabla f = \psi^\ast \bar{Z}^\top.
 \end{equation*}
 Thus, applying equation (\ref{eq 1}) to all the ends of $M$ together
 with the divergence theorem and identity (\ref{div Y interno G}), we find
 \begin{equation} \label{eq 2}
  \mathrm{m}_q\bigl(M,\psi^\ast \bar{\delta}\bigr)
  = a(n,q) \int_M \bigl[ (n-2q)S_{2q} + (2q + 1) \bigl\langle S_{2q+1}, \bar{Z} \bigr\rangle \bigr] {\rm d}M.
 \end{equation}

 It remains to show that both $S_{2q}$ and $\bigl\langle S_{2q+1}, \bar{Z} \bigr\rangle$ are integrable, so the right hand side of~(\ref{eq 2}) can be broken in two. The integrability of $S_{2q}$ follows from (\ref{trace Lovelock}), (\ref{trace T}), (\ref{equation relation G and T}) and the integrability of $L_{(q)}$. Once we know that $S_{(2q)}$ is integrable, the integrability of $\bigl\langle S_{2q+1}, \bar{Z} \bigr\rangle$ follows from the fact that the left hand side of (\ref{eq 2}) is finite.
\end{proof}

An immediate consequence of this theorem is the following.

\begin{Corollary} \label{corollary main theorem}
 Under the same hypothesis as the ones in Theorem~{\rm \ref{main theorem}},
 $\mathrm{m}_q\bigl(M,\psi^*\bar{\delta}\bigr) \geq 0$ if and only if
 \begin{equation*}
  (n-2q) \int_M S_{2q} \, {\rm d}M + (2q+1) \int_M \bigl\langle S_{2q+1}, \bar{Z} \bigr\rangle \, {\rm d}M \geq 0.
 \end{equation*}
\end{Corollary}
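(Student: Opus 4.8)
The plan is to read the equivalence off directly from the integral identity of Theorem~\ref{main theorem}, which is the only input needed. That theorem guarantees, under the stated hypotheses, that the functions $S_{2q}$ and $\langle S_{2q+1}, \bar{Z} \rangle$ are integrable on $\bigl(M,\psi^*\bar{\delta}\bigr)$ and that
\[
 \mathrm{m}_q\bigl(M,\psi^*\bar{\delta}\bigr) = a(n,q) \biggl[ (n-2q) \int_M S_{2q} \, {\rm d}M + (2q+1) \int_M \langle S_{2q+1}, \bar{Z} \rangle \, {\rm d}M \biggr].
\]
In particular, the bracketed expression is a well-defined finite real number, and the sign of $\mathrm{m}_q\bigl(M,\psi^*\bar{\delta}\bigr)$ is determined entirely by the sign of that bracket together with the sign of the scalar prefactor $a(n,q)$.

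The single substantive point is therefore to check that $a(n,q) > 0$, after which the corollary is immediate. Recall that
\[
 a(n,q) = \frac{(2n)!\,(n-2q-1)!}{2^q (n-1)! \, \omega_{n-1}}.
\]
Every factor on the right is manifestly positive: $(2n)!$, $(n-1)!$ and $2^q$ are positive, and $\omega_{n-1} > 0$ since it is the volume of the unit sphere $S^{n-1}$. The only factor requiring a moment's thought is $(n-2q-1)!$, which is defined and positive precisely when $n - 2q - 1 \geq 0$. This follows from the hypothesis $0 < q < n/2$: since $q$ is an integer, the strict inequality $2q < n$ forces $2q \leq n-1$, whence $n - 2q - 1 \geq 0$. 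Thus $a(n,q) > 0$.

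Having established $a(n,q) > 0$, I would conclude by dividing the displayed identity through by this positive constant. Then $\mathrm{m}_q\bigl(M,\psi^*\bar{\delta}\bigr) \geq 0$ holds if and only if the bracket is nonnegative, that is, if and only if
\[
 (n-2q) \int_M S_{2q} \, {\rm d}M + (2q+1) \int_M \langle S_{2q+1}, \bar{Z} \rangle \, {\rm d}M \geq 0,
\]
which is exactly the asserted equivalence. There is no genuine obstacle here beyond the bookkeeping verification that the prefactor is positive; the entire content of the statement has already been absorbed into Theorem~\ref{main theorem}, and the corollary merely records that multiplication by a positive constant preserves the sign.
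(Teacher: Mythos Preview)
Your proof is correct and matches the paper's treatment: the paper states the corollary as ``an immediate consequence'' of Theorem~\ref{main theorem} without further argument, and your explicit verification that $a(n,q)>0$ (using $0<q<n/2$ with $q$ an integer to ensure $(n-2q-1)!$ is defined and positive) is precisely the bookkeeping needed to read the sign equivalence off the integral identity.
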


A famous theorem by Nash \cite{Nash} states that any Riemannian manifold $(M,g)$ can be isometrically immersed in some Euclidean space $\bigl(\mathbb{R}^d, \bar{\delta}\bigr)$. Encouraged by Nash's theorem, we make the following conjecture.

\begin{Conjecture} \label{q1}
   If $(M^n,g)$, $n \geq 3$, is an asymptotically Euclidean manifold of order $\tau > 0$, then there exists an asymptotically Euclidean isometric immersion $\psi\colon  M \to \mathbb{R}^d$ of order $\tau$
  $($in the sense of Definition $\ref{defn AE})$.
\end{Conjecture}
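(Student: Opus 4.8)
Because $\psi$ is required to be isometric, one has $\psi^{\ast}\bar{\delta}=g$, so condition $(i)$ of Definition~\ref{defn AE} is automatic: the Riemannian manifold $(M,\psi^{\ast}\bar{\delta})$ is literally $(M,g)$, which is complete and asymptotically Euclidean of order $\tau$ by hypothesis. The entire content of the conjecture is therefore to arrange condition $(ii)$, which is a statement about the radial function $\psi^{\ast}\bar{\rho}^{2}=|\psi|^{2}$ of the immersion. The plan is to construct $\psi$ piecewise --- one isometric immersion on each end, together with a Nash isometric immersion \cite{Nash} on a compact core --- and to patch these into a single isometric immersion of $(M,g)$ into one $\mathbb{R}^{d}$. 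Since condition $(ii)$ only restricts the behavior as $\rho\to\infty$, the core immersion is unconstrained there and serves only to close up the ends.

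Fix an end $E\cong\mathbb{R}^{n}\setminus\overline{B}_{1}$ with coordinates $\Psi(x)=(x^{1},\dots,x^{n})$ satisfying (\ref{decay conditions}), so that the flat coordinate metric $\delta$ is defined on $E$ and $g=\delta+\theta$ with $\theta=O(\rho^{-\tau})$ in $C^{2}$. I would look for $\psi$ as a decaying perturbation $\psi=\psi_{0}+\xi$ of a flat isometric embedding $\psi_{0}$ of the model $(E,\delta)$ normalized so that $|\psi_{0}|^{2}=\rho^{2}$ (for instance a rescaled product embedding into $\mathbb{R}^{n}\times\mathbb{R}^{n}\times\mathbb{R}^{s}$, the extra factors being present to absorb the sign-indefinite part of $\theta$, which cannot be realized by a graph over $\mathbb{R}^{n}$ alone). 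The key estimate I would aim for is that the correction obeys $\xi=O(\rho^{-\tau+1})$ and $\partial\xi=O(\rho^{-\tau})$, which is precisely the decay suggested by linearizing the isometric system $\langle\partial_{i}\psi,\partial_{j}\psi\rangle=g_{ij}$ about $\psi_{0}$: the linearization forces $\partial\xi$ to be of the size of $\theta$, and radial integration then yields $\xi=O(\rho^{-\tau+1})$. Granting this, a direct expansion
\begin{equation*}
 \partial_{i}\bigl(|\psi|^{2}-\rho^{2}\bigr) = 2\bigl\langle\partial_{i}\psi_{0},\xi\bigr\rangle + 2\bigl\langle\psi_{0},\partial_{i}\xi\bigr\rangle + 2\bigl\langle\xi,\partial_{i}\xi\bigr\rangle
\end{equation*}
shows, using $\psi_{0}=O(\rho)$ and $\partial\psi_{0}=O(1)$, that each term is $O(\rho^{-\tau+1})$, which is exactly condition $(ii)$.

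The two steps I expect to be genuinely hard are both analytic. The first and main obstacle is to produce the perturbation $\xi$ with the claimed weighted decay: this amounts to a quantitative, weighted version of Nash's (or G\"unther's) isometric embedding theorem in weighted H\"older or Sobolev spaces, with the weight tied to $\tau$, and the usual freeness hypothesis of G\"unther's fixed-point scheme must be arranged uniformly at infinity while remaining compatible with the normalization $|\psi_{0}|^{2}=\rho^{2}$ --- a tension that is not present in the compact setting. The second obstacle is the gluing: one must merge the end immersions (each a priori into its own Euclidean factor) with the core immersion into a single $\mathbb{R}^{d}$ while preserving \emph{exact} isometry across the transition regions, not merely asymptotically. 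Controlling the decay through this perturbative-and-gluing construction is exactly what prevents the argument from being routine, and is the reason the statement is recorded here as a conjecture rather than a theorem.
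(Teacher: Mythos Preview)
The statement you are addressing is recorded in the paper as Conjecture~\ref{q1}; the authors offer no proof, only the motivation via Nash's theorem. There is therefore no ``paper's own proof'' against which to compare your attempt, and what you have written is not a proof either but a strategy outline --- as you yourself acknowledge in your closing sentence.

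As a strategy your outline is reasonable and isolates the right difficulties. Two points deserve sharpening. First, the tension you flag between freeness and the normalization $|\psi_0|^{2}=\rho^{2}$ is real and not merely technical: the natural flat inclusion $x\mapsto(x,0,\dots,0)$ satisfies the normalization but is totally geodesic, hence not free, so a G\"unther-type fixed-point scheme cannot be run about it; any free modification of $\psi_0$ will in general spoil $|\psi_0|^{2}=\rho^{2}$, and it is not clear one can have both simultaneously with uniform control at infinity. Second, the gluing step is more serious than ``delicate'': smooth isometric immersions cannot be patched by partitions of unity, and the global methods that do produce them (Nash's iteration applied to the whole manifold, or G\"unther's scheme) give no a~priori control on the radial function $|\psi|^{2}$ at infinity, which is exactly what condition~(ii) of Definition~\ref{defn AE} demands. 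These are precisely the obstacles that leave the statement a conjecture; your discussion anticipates them correctly, but the write-up remains a research programme rather than a proof.
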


Let $n$ and $q$ be integers such that $n \geq 3$ and $0 < q < n/2$. Suppose that Conjecture \ref{q1} is true for every asymptotically Euclidean manifold $(M^n,g)$ of order $\tau > \tau_q$ (or at least for those whose Gauss--Bonnet curvature $L_{(q)}$ is nonnegative and integrable). Then, by Theorem \ref{main theorem}, Conjecture~\ref{PMT for m_q} would be a direct consequence of the following conjecture.

\begin{Conjecture} \label{q2}
   Let $n$ and $q$ be integers such that $n \geq 3$ and $0 < q < n/2$. Let $\psi\colon  M^n \to \mathbb{R}^d$, $d>n$, be an asymptotically Euclidean immersion of order $\tau > \tau_q$ for which $S_{2q}$ is integrable and nonnegative. It holds
  \begin{equation*}
    (n-2q) \int_M S_{2q} \, {\rm d}M + (2q+1) \int_M \bigl\langle S_{2q+1}, \bar{Z} \bigr\rangle \, {\rm d}M \geq 0,
  \end{equation*}
  with the equality holding if and only if $\bigl(M,\psi^\ast \bar{\delta}\bigr)$ is isometric
  to Euclidean space.
\end{Conjecture}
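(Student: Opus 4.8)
The plan is to recognize the displayed quantity as the $q$-th GBC mass up to a positive factor and then to prove its positivity directly. By Theorem \ref{main theorem}, the bracketed expression equals $\mathrm{m}_q(M,\psi^*\bar\delta)/a(n,q)$ with $a(n,q) > 0$; moreover, combining (\ref{trace Lovelock}), (\ref{trace T}) and (\ref{equation relation G and T}) gives $L_{(q)} = (2q)!\,S_{(2q)}$, so the hypothesis that $S_{2q}$ is integrable and nonnegative is equivalent to $L_{(q)}$ being integrable and nonnegative. Thus the asserted inequality is exactly the statement $\mathrm{m}_q(M,\psi^*\bar\delta) \geq 0$, and the equality case is the assertion that vanishing mass forces $(M,\psi^*\bar\delta)$ to be isometric to $(\mathbb{R}^n,\delta)$. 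In other words, the conjecture is precisely Conjecture \ref{PMT for m_q} specialized to manifolds that come equipped with an asymptotically Euclidean isometric immersion, and I would attempt to prove it by exploiting this extra datum.

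The natural extrinsic tool is the divergence identity (\ref{div Y interno G}). Integrating it over the exhausting domains $M_r := \psi^{-1}(\{\bar\rho \leq r\})$ and applying the divergence theorem yields the flux formula
\[
  \int_{M_r}\!\bigl[(n-2q)S_{2q} + (2q+1)\langle S_{2q+1},\bar Z\rangle\bigr]\,\mathrm{d}M
  = -\frac{2}{(2q)!}\int_{\partial M_r} G_{(q)}(Y,\eta)\,\mathrm{d}S,
\]
where $Y = \psi^*\bar Z^\top$ and $\eta$ is the outward unit conormal of $\partial M_r$ in $M$; by Theorem \ref{main theorem} the left-hand side tends to $\mathrm{m}_q/a(n,q)$ as $r\to\infty$. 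The crux of the matter, which I would isolate at the outset, is that the pointwise integrand is not sign-definite: the term $\langle S_{2q+1},\bar Z\rangle$ carries no sign, and $S_{2q}\geq 0$ alone does not place the immersion in the relevant G\aa{}rding cone, so the Newton transformation $T_{(2q)}$, equivalently $-G_{(q)}$, need not be positive semidefinite. Consequently the flux has no obvious sign on any finite level set, and positivity must be extracted from the global structure.

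To supply that global input I would pursue a reduction to the cases of Conjecture \ref{PMT for m_q} that are already established: graphs of codimension one \cite{GeAnewmassforasymptoticallyflatmanifolds2014}, graphs of arbitrary codimension with flat normal bundle \cite{AlexandredeSousaGaussBonnetChernmasshigher2019}, conformally flat manifolds \cite{GeGaussBonnetChernmassconformally2014}, and, for $q=1$, general Euclidean hypersurfaces \cite{Huanghypersurfaceswithnonnegativescalar2013}. Since $M$ is already concretely realized in $\mathbb{R}^d$, one can hope to deform the immersion---through a density-type perturbation that changes the asymptotics negligibly, in the spirit of the reduction to harmonic asymptotics underlying (\ref{mass in terms of the Einstein tensor})---toward a graphical or conformally flat model while preserving $L_{(q)}\geq 0$, and then transfer both the inequality and its equality case back. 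The rigidity statement would emerge from the equality case of the underlying positive mass argument, forcing $L_{(q)}\equiv 0$ and, ultimately, flatness of $(M,\psi^*\bar\delta)$.

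I expect the decisive obstacle to be precisely this production of genuine global positivity for $q \geq 2$ in arbitrary codimension from the single hypothesis $S_{2q}\geq 0$. Witten's spinorial method has no established analogue for the higher GBC mass, and submanifolds of large codimension need not carry a usable spin structure; meanwhile the Schoen--Yau minimal-hypersurface machinery does not obviously adapt to the condition $L_{(q)}\geq 0$ when $q\geq 2$. Overcoming this is tantamount to establishing Conjecture \ref{PMT for m_q} itself in the immersed setting, which is exactly why the statement is recorded here as a conjecture rather than proved as a theorem.
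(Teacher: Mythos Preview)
The statement is labeled a \emph{Conjecture} in the paper and no proof is given; the paper merely observes, in the paragraph preceding it, that granted Conjecture~\ref{q1} this Conjecture~\ref{q2} would imply Conjecture~\ref{PMT for m_q}. There is therefore no proof in the paper to compare your attempt against, and your proposal correctly arrives at this diagnosis in its final paragraph.

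Your reformulation is sound and matches the paper's own framing: Theorem~\ref{main theorem} identifies the bracketed quantity with $\mathrm{m}_q(M,\psi^*\bar\delta)/a(n,q)$, and your computation $L_{(q)} = (2q)!\,S_{(2q)}$ from (\ref{trace Lovelock}), (\ref{trace T}) and (\ref{equation relation G and T}) is correct, so the hypothesis that $S_{2q}$ be nonnegative and integrable is exactly the hypothesis that $L_{(q)}$ be nonnegative and integrable. One minor caveat: Conjecture~\ref{PMT for m_q} asserts nonnegativity of the mass of \emph{each} end, whereas Conjecture~\ref{q2}, as stated, concerns the total mass $\mathrm{m}_q(M,\psi^*\bar\delta)$; these coincide for manifolds with a single end but are not literally identical in general, so ``precisely Conjecture~\ref{PMT for m_q} specialized'' slightly overstates the equivalence. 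Your catalogue of obstacles---no pointwise sign on the flux integrand, no spinorial or minimal-hypersurface analogue available for $q\ge 2$---is accurate and consonant with why the paper records the statement as open.
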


\subsection*{Acknowledgements}

This study was partially financed by the Coordena\c c\~ao de Aperfei\c coamento de Pessoal de N\'ivel Superior - Brasil (CAPES) - Finance Code 001.
This work was partially done while Alexandre de Sousa was a CAPES Fellow at the Mathematics Institute of Federal University of Alagoas (IM/UFAL), whose members he would like to thank for the hospitality.
Frederico Gir\~ao was partially supported by CNPq, grant number 307239/2020-9.


\pdfbookmark[1]{References}{ref}
\LastPageEnding

\end{document}